\numberwithin{equation}{section}
\theoremstyle{plain}
\newtheorem{theorem}{Theorem}[section]
\newtheorem{lemma}[theorem]{Lemma}
\newtheorem{coro}[theorem]{Corollary}
\theoremstyle{definition}
\newtheorem{claim}[theorem]{Claim}
\newtheorem{remark}[theorem]{Remark}
\newcommand{\C}{{\mathbb{C}}}
\newcommand{\N}{{\mathbb{N}}}
\newcommand{\R}{{\mathbb{R}}}
\newcommand{\Z}{{\mathbb{Z}}}
\newcommand{\calJ}{{\mathcal{J}}}
\newcommand{\calLP}{{\mathcal{LP}}}
\newcommand{\atr}{{\mathrm{atr}}}
\renewcommand{\Im}{{\mathrm{Im}\,}}
\newcommand{\Leb}{{\mathrm{Leb}}}
\newcommand{\LP}{{\mathrm{LP}}}
\renewcommand{\Re}{{\mathrm{Re}\,}}
\newcommand{\SO}{{\mathrm{SO}}}
\newcommand{\SL}{{\mathrm{SL}}}
\newcommand{\tr}{{\mathrm{Tr\,}}}
\DeclareMathOperator*{\wlim}{{w-lim}}
\newenvironment{claimproof}[1][Proof of Claim]{\noindent \underline{#1.} }{\hfill$\diamondsuit$}
\begin{document}

\title[Limit-Periodic Jacobi Matrices]{Thin Spectra and Singular Continuous Spectral Measures for Limit-Periodic Jacobi Matrices}

\author[D.\ Damanik]{David Damanik}
\address{Department of Mathematics, Rice University, Houston, TX~77005, USA}
\email{damanik@rice.edu}

\author[J.\ Fillman]{Jake Fillman}
\address{Department of Mathematics, Texas State University, San Marcos, TX 78666, USA}
\email{fillman@txstate.edu}

\author[C.\ Wang]{Chunyi Wang}
\address{Department of Mathematics, Rice University, Houston, TX~77005, USA}
\email{cw72@rice.edu}

\maketitle

\begin{abstract}
This paper investigates the spectral properties of Jacobi matrices with limit-periodic coefficients. We show that for a residual set of such matrices, the spectrum is a Cantor set of zero Lebesgue measure, and the spectral measures are purely singular continuous. For a dense set of limit-periodic Jacobi matrices we can strengthen the result and show that the spectrum is a Cantor set of zero lower box counting dimension, and hence in particular of zero Hausdorff dimension, while still retaining the singular continuity of the spectral type. We also show how results of this nature can be established by fixing the off-diagonal coefficients and varying only the diagonal coefficients, and, in a more restricted version, by fixing the diagonal coefficients to be zero and varying only the off-diagonal coefficients. We apply these results to produce examples of weighted Laplacians on the multidimensional integer lattice having purely singular continuous spectral type and zero-dimensional spectrum.
\end{abstract}

\setcounter{tocdepth}{1}
\tableofcontents

\section{Introduction and Results}

\subsection{General Background}
The present paper is concerned with Jacobi matrices $J = J_{a,b}$, acting in $\ell^2(\Z)$ via
\begin{equation} \label{eq:intro:jacobidef} [Ju]_n = a_{n-1}u_{n-1} + b_n u_n + a_n u_{n+1},\end{equation}
with $a_n>0$, $b_n \in \R$, $n \in \Z$. We shall consider bounded Jacobi matrices, for which
\begin{equation} \label{eq:intro:jacnormdef} \langle J \rangle := \max\left\{ \|a\|_\infty, \|a^{-1}\|_\infty, \|b\|_\infty \right\} \leq M<\infty.\end{equation}
Given $M>1$, we denote $\calJ_M$ the set of all Jacobi matrices with $\langle J \rangle \leq M$ and $\calJ$ the set of all bounded Jacobi matrices with $ \langle J \rangle <\infty$.

Jacobi matrices play a fundamental role in the spectral theory of bounded self-adjoint operators. In the spectral subspace generated by a designated vector $\psi$, the spectral theorem allows one to conjugate the given bounded self-adjoint operator to the multiplication by the independent variable in $L^2(\R,d\mu_\psi)$, where $\mu_\psi$ is the spectral measure associated with $\psi$. This measure will be compactly supported and the Gram-Schmidt procedure then turns the monomials into an orthonormal basis of $L^2(\R,d\mu_\psi)$, with respect to which the multiplication operator is then represented by a symmetric tridiagonal matrix with strictly positive off-diagonal elements -- a Jacobi matrix.\footnote{This discussion sweeps some minor subtleties under the rug.} As a result, one-sided infinite Jacobi matrices are the canonical building blocks in the spectral theory of bounded self-adjoint operators, and hence their spectral analysis is naturally important.

The passage to two-sided infinite Jacobi matrices is natural whenever the coefficients have a natural extension to the left half-line, and moreover the spectral analysis of the operator makes use of this extension. Such a scenario arises, for example, in the case where the coefficient sequences are obtained by sampling along the orbits of an \emph{invertible} discrete-time dynamical system.

A very prominent special case of such an invertible discrete-time dynamical system is given by a minimal translation on a compact abelian group. If the sampling functions are continuous, the resulting sequences are almost periodic in the sense of Bohr or Bochner. Among almost periodic sequences, special emphasis has been paid to quasi-periodic and limit-periodic sequences, which correspond to minimal shifts on tori of finite dimension and procyclic groups, respectively. We also note that the intersection of these two subclasses consists precisely of the periodic sequences.

Jacobi matrices with quasi-periodic coefficient sequences have been studied in numerous recent papers; compare, for example, \cite{AJM17, HanJit2017Adv, HanYanZha2020Jdam, K17, M14, T14, T20, TV17} and references therein. On the other hand, Jacobi matrices with limit-periodic sequences have not yet been studied in similar depth and it is therefore our goal in this paper to advance the understanding of such operators.

Another useful perspective on Jacobi matrices arises from viewing them as generalizations of discrete Schr\"odinger operators, which are Jacobi matrices whose off-diagonal terms are $a_n = 1$ for every $n$. The latter operators are of interest due to their central role in quantum mechanics, admittedly in a very simple setting -- a discretized one-dimensional position space. The existing literature on discrete Schr\"odinger operators with quasi-periodic or limit-periodic potentials (i.e., diagonal coefficients) is vast. Early work on limit-periodic operators was begun in the 1980s by authors including  Avron-Simon \cite{AS81}, Chulaevskii \cite{Chul81, Chul84}, Egorova \cite{egorova}, Molchanov--Chulaevskii \cite{MC84}, Moser \cite{M81}, Pastur--Tkachenko \cite{PT1, PT2}, and P\"oschel \cite{P83}. There has been a recent uptick in activity, ushered in by Avila \cite{A09}, which has led to the realization that new phenomena are possible, such as the genericity of purely singular continuous spectrum \cite{A09, DG11} in the space of all limit-periodic potentials.  Another new phenomenon established in a recent paper is the Lipschitz continuity of the integrated density of states in P\"oschel's examples \cite{DF2018}.  One can also study higher dimensional limit-periodic and quasi-periodic operators, which has been done in works of Karpeshina--Lee \cite{KarLee2007JAM, KarLee2009OTAA, KarLee2013JAM},  Karpeshina--Parnovski--Shterenberg \cite{KarParSht, KarParSht2}, and others. For surveys, book treatments, and additional references on quasi- and limit-periodic Schr\"odinger operators, we refer the reader to \cite{CFKS, D17, DF20, DF21a, DF21b, MJ17, PasturFigotinBook} and references therein. 

Existing results for discrete one-dimensional Schr\"odinger operators with quasi-periodic or limit-periodic potentials generally form the basis for subsequent extensions to Jacobi matrices (i.e., more general off-diagonal terms). The present paper follows the same philosophy and explores ways to extend some of the existing results for discrete Schr\"odinger operators with limit-periodic potentials \cite{A09, DG10, DG11} to suitable results for Jacobi matrices with limit-periodic coefficients.

\subsection{Setting and Main Results}

For $p \in \N$, we say that $J$ of the form \eqref{eq:intro:jacobidef} is $p$-periodic if $a_{n+p}\equiv a_n$ and $b_{n+p} \equiv b_n$ and \emph{limit-periodic} if there are periodic $J^{(1)},J^{(2)},\ldots$ such that $J^{(n)} \to J$ in the operator norm as $n \to\infty$.
For each $M> 1$, let $\calLP_M$ denote the set of limit-periodic elements of $\calJ_M$, that is, the set of limit-periodic $J$ for which $\langle J \rangle \leq M$, where $\langle \cdot \rangle$ is as in \eqref{eq:intro:jacnormdef}.
For each $M$, $\calLP_M$ is a complete metric space in the metric given by the operator norm.

Our first main result concerns the genericity of zero-measure spectrum within this class of operators.

\begin{theorem} \label{t:genericZMspec}
Let $M > 1$. For generic $J \in \calLP_M$, $\sigma(J)$ is a Cantor set of zero Lebesgue measure, and the spectral type of $J$ is purely singular continuous.
\end{theorem}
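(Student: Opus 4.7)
The plan is a Baire-category argument in the complete metric space $\calLP_M$. I will exhibit three dense $G_\delta$ subsets of $\calLP_M$: (A) $\{J : \Leb(\sigma(J)) = 0\}$; (B) the operators with purely continuous spectrum; and (C) the operators with purely singular spectrum. On the intersection $(A)\cap(B)\cap(C)$ the spectral type is purely singular continuous; moreover, since an isolated point of the spectrum of a bounded self-adjoint operator is always an eigenvalue, (B) forces $\sigma(J)$ to be perfect, and combined with (A) together with the fact that any closed zero-measure subset of $\R$ is totally disconnected, this makes $\sigma(J)$ a Cantor set.

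That (B) and (C) are $G_\delta$ in any complete metric space of bounded self-adjoint operators whose topology is at least as strong as the strong operator topology is the content of Simon's Wonderland theorem. Density of (B) is immediate: periodic Jacobi matrices lie dense in $\calLP_M$ by the definition of limit-periodic, and they have purely absolutely continuous (in particular purely continuous) spectrum. Density of (C) will follow from density of (A), since zero Lebesgue measure of $\sigma(J)$ forces the absolutely continuous component of every spectral measure to vanish: the a.c.\ part is supported in $\sigma(J)$ and is absolutely continuous with respect to Lebesgue.

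The technical heart is therefore (A). Openness of $Z_\epsilon := \{J \in \calLP_M : \Leb(\sigma(J)) < \epsilon\}$ follows from the $1$-Lipschitz continuity of the spectrum in Hausdorff distance under norm perturbations of self-adjoint operators, which yields upper semi-continuity of $J \mapsto \Leb(\sigma(J))$. For density, given $J \in \calLP_M$ and $\delta, \epsilon > 0$, first approximate $J$ within $\delta/2$ by a $p$-periodic $J^{(0)} \in \calLP_M$, using that $\calLP_M$ is the norm-closure of periodic matrices in $\calJ_M$; then approximate $J^{(0)}$ within $\delta/2$ by a $kp$-periodic $\tilde J \in \calLP_M$ with $\Leb(\sigma(\tilde J)) < \epsilon$. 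This reduces the density of $Z_\epsilon$ to a question purely about periodic Jacobi matrices.

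The main obstacle is precisely that question: producing a small norm-perturbation of an arbitrary $p$-periodic Jacobi matrix bounded by $M$ whose spectrum has Lebesgue measure less than $\epsilon$, while keeping both diagonal and off-diagonal coefficients within the constraints defining $\calJ_M$. Following the Schr\"odinger-operator construction of Avila and Damanik--Gan \cite{A09, DG11}, the idea is to take period $kp$ with $k$ large and apply a small periodic perturbation designed to push the discriminant of the monodromy matrix outside $[-2,2]$ on most of the real line, opening many small Floquet gaps and squeezing the spectral bands to collective measure below $\epsilon$. In the Schr\"odinger case only $b_n$ is perturbed and the discriminant bookkeeping is well understood; the main technical work in the Jacobi setting will be to carry out the analogous construction while perturbing $a_n$ and $b_n$ simultaneously and preserving uniform positivity of $a_n$. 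Once this periodic density lemma is in place, the rest of the proof is a clean assembly of Simon's Wonderland theorem and the Baire category theorem.
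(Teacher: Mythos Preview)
Your outline is correct, and the reduction to the periodic spectral-shrinking lemma is exactly the heart of the matter, but your route differs from the paper's in two places worth noting.

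First, for the absence of point spectrum you invoke Simon's Wonderland theorem: the set of operators with empty point spectrum is automatically $G_\delta$, and periodic Jacobi matrices (which have purely absolutely continuous spectrum) supply the density. The paper instead builds an explicit dense $G_\delta$ set of Gordon-type operators and applies a Jacobi-matrix Gordon lemma. Your soft argument is cleaner for the generic statement of Theorem~\ref{t:genericZMspec}, but the paper needs the Gordon machinery anyway for Theorems~\ref{t:denseZHDimspec} and~\ref{t:denseZHDimspecFixedDiag}, where one constructs a \emph{specific} limit-periodic operator rather than intersecting residual sets, and Baire category is unavailable. Similarly, your derivation of the Cantor property (isolated spectral points are eigenvalues, zero-measure closed sets are totally disconnected) is more elementary than the paper's appeal to Pastur's theorem on the absence of isolated points for ergodic operators.

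Second, for the key lemma you propose perturbing $a_n$ and $b_n$ simultaneously while preserving the lower bound on $a_n$. This is more than you need: the paper (Lemma~\ref{lem:DJMsmallspec}) keeps the periodic off-diagonal sequence $a$ fixed and perturbs only $b$ by small additive constants $b\mapsto b+c$, exactly as in Avila's Schr\"odinger argument. The additive route sidesteps the multiplicative complications (behavior near $E=0$, preservation of positivity) that the paper has to confront separately in Lemma~\ref{lem:ODJMsmallspec} for the off-diagonal-only problem. So your ``main technical work'' is lighter than you anticipate: fix $a$, shift $b$, and the discriminant bookkeeping is essentially the Schr\"odinger case.
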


\begin{remark}
(a) As usual, a statement is said to hold for a generic element of a complete metric space if the set of elements with the property in question is residual, that is, it contains a dense $G_\delta$ subset of the underlying space.
\\[1mm]
(b) A compact subset of $\R$ is called a \emph{Cantor set} if it has empty interior and no isolated points.
\\[1mm]
(c) The spectral type of $J$ is purely singular continuous if for every $\psi \in \ell^2(\Z)$, the spectral measure $\mu_\psi$ is singular continuous, that is, it is supported by some set of zero Lebesgue measure and it gives no weight to individual points.
\end{remark}

By a quantitative refinement, one can sharpen the measure estimate to a dimensional estimate, at the price of exchanging a residual set for a dense set of operators.

\begin{theorem} \label{t:denseZHDimspec} For a dense set of $J \in \calLP_M$, $\sigma(J)$ is a Cantor set of zero lower box counting dimension, and the spectral type is again purely singular continuous.
\end{theorem}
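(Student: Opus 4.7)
The strategy is to construct, in an arbitrary neighborhood of any given $J \in \calLP_M$, an explicit limit-periodic Jacobi matrix $J_\infty$ with zero lower box counting dimension spectrum and purely singular continuous spectral type. Since by definition periodic Jacobi matrices are dense in $\calLP_M$, it suffices to start from an arbitrary periodic $J^{(0)}$ (of period $p_0$) and produce such a $J_\infty$ within a prescribed norm distance of it. The construction is iterative: I would build a sequence $J^{(0)}, J^{(1)}, J^{(2)}, \ldots$ of periodic Jacobi matrices with periods $p_k = m_{k-1} p_{k-1}$ satisfying $\|J^{(k)} - J^{(k-1)}\| < \epsilon_k$ for a prescribed summable sequence $\epsilon_k$, so that the $J^{(k)}$ converge in operator norm to a limit-periodic $J_\infty$. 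This is the Jacobi analogue of the approach used by Avila \cite{A09} and Damanik-Gan \cite{DG11} in the Schr\"odinger case.

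The key technical ingredient is an all-gaps-open perturbation lemma for periodic Jacobi matrices: given a $p_k$-periodic Jacobi matrix $J^{(k)}$ and any $\epsilon_{k+1}, \eta_k > 0$, one can choose $m_k$ large and produce a $p_{k+1}$-periodic $J^{(k+1)}$ with $\|J^{(k+1)} - J^{(k)}\| < \epsilon_{k+1}$ such that $\sigma(J^{(k+1)})$ consists of exactly $p_{k+1}$ disjoint bands, all lying within a $C\epsilon_{k+1}$-neighborhood of $\sigma(J^{(k)})$, and with total Lebesgue measure less than $\eta_k$. The Jacobi setting actually provides more perturbation freedom than the Schr\"odinger one through the off-diagonal coefficients. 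Choosing the parameters $(m_k, \epsilon_k, \eta_k)$ so that the number of bands $N_k \leq p_k$ and the maximum band length $\ell_k$ of $\sigma(J^{(k)})$ satisfy $\log N_k / |\log \ell_k| \to 0$ (which requires $\ell_k$ to decay super-exponentially in $p_k$, achievable by taking $\eta_k$ small), and using Hausdorff-distance continuity of the spectrum under operator-norm perturbations (controlled by $\sum_{j > k} \epsilon_j$), one gets a covering of $\sigma(J_\infty)$ by at most $N_k$ intervals of length $\lesssim \ell_k$. This yields $\underline{\dim}_B \sigma(J_\infty) = 0$, and in particular $|\sigma(J_\infty)| = 0$, so $J_\infty$ has no absolutely continuous spectrum.

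For the absence of eigenvalues, one cannot directly quote Theorem \ref{t:genericZMspec} because $J_\infty$ is a specific constructed operator, not a generic one. Instead, I would invoke a Gordon-type lemma for limit-periodic Jacobi matrices: since $J^{(k+1)}$ agrees with $J^{(k)}$ on intervals of length $p_k$ repeated infinitely often and $\|J^{(k+1)} - J^{(k)}\| < \epsilon_{k+1}$, the coefficient sequences of $J_\infty$ exhibit arbitrarily long three-block repetitions (to within exponentially small errors), which by the standard Gordon argument transferred to Jacobi matrices forces every solution of the eigenvalue equation to fail the $\ell^2$-condition on at least one half-line. The principal obstacle in implementing this plan is proving the all-gaps-open perturbation lemma with the required quantitative shrinkage of the total band length; this demands a careful analysis of how the Floquet discriminant of a periodic Jacobi matrix evolves under small perturbations of the coefficients, ensuring that the discriminant attains values of absolute value strictly greater than $2$ on $N_{k+1} - 1$ disjoint open intervals while its range on the full spectrum can be tightly controlled. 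This is the crux of the argument and where the bulk of the work will lie.
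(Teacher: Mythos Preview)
Your proposal is correct and follows essentially the same route as the paper: an iterative construction of periodic approximants via a perturbation lemma that produces exponentially small spectral measure (the paper's Lemma~\ref{lem:DJMsmallspec}), a covering argument comparing $\log p_k$ to $|\log \ell_k|$ for the box-counting estimate, and a Gordon-type criterion together with zero Lebesgue measure for singular continuity. The only structural difference is that the paper first reduces to the fixed-off-diagonal setting (Theorem~\ref{t:denseZHDimspecFixedOffD}) and then deduces Theorem~\ref{t:denseZHDimspec}; also note that ``super-exponential'' decay of $\ell_k$ in $p_k$ is more than you need---exponential (indeed $e^{-p_k^{1/2}}$) already gives $\log p_k / |\log \ell_k| \to 0$.
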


\begin{remark} \label{rem:zeroboxtozerohd}
Recall that for a bounded subset of $\R$, its Hausdorff dimension is bounded from above by its lower box counting dimension. Thus, in the setting of Theorem~\ref{t:denseZHDimspec} (as well as that of Theorems~\ref{t:denseZHDimspecFixedOffD} and \ref{t:denseZHDimspecFixedDiag} below), the spectrum in particular has zero Hausdorff dimension and zero Lebesgue measure.
\end{remark}

We will in fact derive a slightly stronger statement by fixing the off-diagonal and varying only the diagonal entries. Let $\LP \subset \ell^\infty(\Z)$ denote the set of limit-periodic sequences, that is, the elements of the closure of the set of periodic sequences in $\ell^\infty(\Z)$.

\begin{theorem} \label{t:genericZMspecFixedOffD}
Let $a$ be periodic with $a_n>0$ for every $n$. For generic $b \in \LP$, $\sigma(J_{a,b})$ is a Cantor set of zero Lebesgue measure, and the spectral type of $J_{a,b}$ is purely singular continuous.
\end{theorem}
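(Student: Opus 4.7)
The plan is to adapt the Baire category argument underlying the proof of Theorem~\ref{t:genericZMspec} to the complete metric space $(\LP, \|\cdot\|_\infty)$, with $a$ held fixed throughout. The residual set of ``good'' diagonals $b$ will be realized as the intersection of two dense $G_\delta$ subsets: first,
\[
\mathcal{Z} = \bigcap_{n \geq 1} \set{b \in \LP : \Leb(\sigma(J_{a,b})) < 1/n},
\]
whose defining sets are open because $b \mapsto \Leb(\sigma(J_{a,b}))$ is upper semi-continuous under $\|\cdot\|_\infty$-perturbations (via upper semi-continuity of the spectrum in Hausdorff distance together with upper semi-continuity of Lebesgue measure on compact subsets of $\R$); and second,
\[
\mathcal{N} = \set{b \in \LP : J_{a,b} \text{ has empty point spectrum}},
\]
which is a $G_\delta$ by a variant of Simon's wonderland argument applied to a countable dense family of vectors in $\ell^2(\Z)$. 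For any $b \in \mathcal{Z} \cap \mathcal{N}$, the spectrum has zero Lebesgue measure (ruling out an absolutely continuous part, since $\sigma_{ac}(J_{a,b}) \subseteq \sigma(J_{a,b})$) and the point spectrum is empty, so the spectral type is purely singular continuous. Zero measure also forces empty interior, and since any isolated point of the spectrum of a bounded self-adjoint operator is an eigenvalue, emptiness of the point spectrum forces the absence of isolated points; thus $\sigma(J_{a,b})$ is a Cantor set.

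Density of $\mathcal{N}$ is immediate, since periodic sequences are dense in $\LP$ by definition and for periodic $b$ the operator $J_{a,b}$ has purely absolutely continuous spectrum by Floquet theory. The crux is density of $\mathcal{Z}$. Given $b \in \LP$ and $\delta, \epsilon > 0$, I would first $\ell^\infty$-approximate $b$ by a $p_0$-periodic sequence $b^{(0)}$, with $p_0$ chosen to be a multiple of the period $p_a$ of $a$. Then $J_{a, b^{(0)}}$ is jointly $p_0$-periodic; via the period-$p_0$ discriminant $\Delta_{p_0}(E) = \tr\bigl(T_{p_0}(E)\bigr)$ its spectrum is the band set $\set{E : |\Delta_{p_0}(E)| \leq 2}$, a union of at most $p_0$ compact intervals. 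The key technical step is to produce an $Np_0$-periodic $\tilde b$ with $\|\tilde b - b^{(0)}\|_\infty < \delta$ and $\Leb(\sigma(J_{a,\tilde b})) < \epsilon$, by a small diagonal perturbation engineered so that $|\Delta_{Np_0}|$ exceeds $2$ on a subset of $\sigma(J_{a, b^{(0)}})$ of nearly full Lebesgue measure, thereby opening many narrow gaps and collapsing the spectrum.

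The main obstacle is carrying out this gap-opening construction with $a$ held fixed, i.e., using only the $Np_0$ diagonal entries $\tilde b_1, \ldots, \tilde b_{Np_0}$ as free parameters. Flexibility is nevertheless available: $\Delta_{Np_0}$ is a polynomial in $E$ of degree $Np_0$ whose leading coefficient is determined by $a$ alone, and each partial derivative $\partial \Delta_{Np_0}/\partial \tilde b_j$ is a nonzero polynomial in $E$, so the $Np_0$-parameter family of admissible perturbations is sufficiently rich to run the Avila-style construction of \cite{A09} and its Jacobi extension developed earlier in the present paper. The verification that this construction goes through unchanged in the reduced parameter space, and in particular that the resulting $\tilde b$ can be kept within the prescribed $\ell^\infty$-ball around $b$ while still producing a spectrum of sub-$\epsilon$ Lebesgue measure, is where the bulk of the technical work resides.
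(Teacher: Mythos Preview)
Your overall architecture matches the paper's: a Baire category argument in $(\LP,\|\cdot\|_\infty)$, intersecting a dense $G_\delta$ on which the spectrum has small Lebesgue measure with a dense $G_\delta$ on which the point spectrum is empty, and the heart of the matter is exactly the small-spectrum perturbation lemma you identify, which in the paper is Lemma~\ref{lem:DJMsmallspec}. So the proposal is correct and the strategy is essentially the same.

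The one genuine difference is in how you handle the exclusion of eigenvalues. The paper builds the dense $G_\delta$ set $\mathcal{G}$ of diagonals $b$ that are \emph{Gordon-type}, meaning they admit periodic approximants converging superexponentially fast, and then invokes a Jacobi version of Gordon's lemma to rule out $\ell^2$ eigenfunctions for every $b\in\mathcal{G}$. Your route instead appeals to the abstract fact that $\{b:\sigma_{\mathrm{pp}}(J_{a,b})=\emptyset\}$ is a $G_\delta$ (the wonderland argument) and gets density from the observation that periodic $b$ give purely absolutely continuous spectrum. Both are valid; the Gordon approach is more constructive and yields a concrete dynamical mechanism for the absence of eigenvalues, while your approach is softer and avoids any quantitative approximation estimates.

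One cautionary remark: your sketch of the density of $\mathcal{Z}$ frames the gap-opening in terms of the polynomial dependence of $\Delta_{Np_0}$ on the $\tilde b_j$'s, but that is not how the construction actually runs. The mechanism in Lemma~\ref{lem:DJMsmallspec} (following Avila) is to use \emph{global additive shifts} $b\mapsto b+c$, which translate the spectrum rigidly, to produce a finite family whose spectra have empty common intersection, and then to concatenate long blocks of these shifted sequences; the Lyapunov exponent bound and the IDS estimate of Theorem~\ref{t:per:ids:hilbschmidt} then force the bands to be exponentially thin. Since you ultimately defer to the lemma already proved in the paper this does not affect correctness, but the discriminant-derivative heuristic you describe would not by itself yield the needed exponential collapse.
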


\begin{theorem} \label{t:denseZHDimspecFixedOffD}
Let $a$ be periodic with $a_n>0$ for every $n$. For a dense set of $b \in \LP$, $\sigma(J_{a,b})$ is a Cantor set of zero lower box counting dimension, and the spectral type of $J_{a,b}$ is purely singular continuous.
\end{theorem}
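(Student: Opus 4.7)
The plan is to refine the iterative periodic-approximation construction underlying Theorem~\ref{t:genericZMspecFixedOffD} by controlling at each stage both the number of spectral bands and their individual lengths, in direct analogy with the passage from Theorem~\ref{t:genericZMspec} to Theorem~\ref{t:denseZHDimspec}. Fix a target $b_\star \in \LP$ and $\varepsilon>0$; since periodic sequences are dense in $\LP$, one may start from a periodic $b_0$ with $\|b_0 - b_\star\|_\infty < \varepsilon/2$. I would then produce inductively periodic $b_k$ of period $p_k$, with $p_{k-1}\mid p_k$ and $p_k \to \infty$, together with $\eta_k, \delta_k \downarrow 0$, such that $\|b_k - b_{k-1}\|_\infty < \eta_k$ with $\sum_k \eta_k < \varepsilon/2$, and such that $\sigma(J_{a,b_k})$ is a union of at most $M_k$ bands, each of length $\leq \delta_k$, with $\log M_k / \log(1/\delta_k) \to 0$. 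Summability of $\eta_k$ gives $b_k \to b^\dagger \in \LP$ with $\|b^\dagger - b_\star\|_\infty < \varepsilon$, and Hausdorff continuity of the spectrum under norm convergence, together with a tail bound $c_k := \sum_{j>k}\eta_j \ll \delta_k$, yields $\sigma(J_{a,b^\dagger}) \subset \bigcap_k \big(\sigma(J_{a,b_k}) + [-c_k, c_k]\big)$, which is covered by $M_k$ intervals of length $\approx \delta_k$; the box-dimension bound follows immediately.

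The heart of the argument is the quantitative periodic step, a Chebyshev-polynomial construction at the level of the discriminant. Given $\tilde b$ of period $q$ and target band size $\delta$, extend $\tilde b$ formally to period $Lq$ (for a large integer $L$) and perturb it to a genuinely $Lq$-periodic $\tilde b'$ within sup-norm distance $\delta$. Before perturbation, the $Lq$-periodic discriminant equals $2T_L(\Delta_q/2)$, where $T_L$ denotes the Chebyshev polynomial and $\Delta_q$ the $q$-periodic discriminant; this expression oscillates $L$ times across each original band of $\sigma(J_{a,\tilde b})$. A carefully tuned diagonal perturbation then pushes the local extrema across the stability threshold $\pm 2$, opening $L-1$ small gaps per original band and producing $L$ subbands per original band; iterating the Chebyshev step within a single stage makes these subbands smaller than any prescribed power of $1/(NLq)$, so by choosing the number of sub-iterations at stage $k$ to grow fast enough one arranges $\log M_k / \log(1/\delta_k) \to 0$. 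The principal technical obstacle is that with the off-diagonal sequence $a$ fixed only diagonal perturbations are available; one must verify that these already form a sufficiently rich family inside the $\delta$-ball to realize the desired Chebyshev-type discriminants, via an implicit function/perturbative argument analogous to the Schr\"odinger analyses in \cite{A09, DG11}.

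For the spectral type, zero lower box counting dimension implies zero Lebesgue measure (Remark~\ref{rem:zeroboxtozerohd}), which directly precludes any absolutely continuous part of the spectral measures. To rule out eigenvalues, one strengthens the construction by also imposing $\eta_k < C^{-p_k}$ for a constant $C = C(M)$ sufficiently large; the resulting rapid $p_k$-periodic approximation makes every energy satisfy the hypotheses of a Jacobi version of Gordon's lemma, precluding $\ell^2$ eigenfunctions. Combined, these ingredients deliver the asserted dense set in $\LP$ with purely singular continuous spectral type and Cantor spectrum of zero lower box counting dimension.
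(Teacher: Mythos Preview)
Your iterative framework (choose periodic $b_k$, control the tail $c_k$ relative to the band lengths $\delta_k$, pass to the Hausdorff limit, then invoke a Jacobi Gordon lemma for the spectral type) is exactly the right scaffolding, and indeed matches what the paper does. The genuine gap is in your ``quantitative periodic step''.

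The Chebyshev/gap-opening mechanism you describe cannot produce zero lower box-counting dimension. Viewing a $q$-periodic operator as $Lq$-periodic and then perturbing by $\eta$ in $\ell^\infty$ opens gaps of width $O(\eta)$, but it does not shrink the total Lebesgue measure of the spectrum by more than $O(\eta \cdot Lq)$; the $L$ sub-bands you create in each old band still have combined length essentially equal to the old band length. Iterating this $m$ times yields roughly $L^m$ bands of individual length $\sim |B|/L^m$, so $\log M_k/\log(1/\delta_k)$ stays bounded away from $0$ rather than tending to $0$. To get $\delta_k$ exponentially small in the period while the band count $M_k$ is only linear in the period, one needs a mechanism that makes \emph{each} band exponentially short, not one that merely subdivides.

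The paper (following Avila \cite{A09} and \cite{DFL2017}) achieves this by a completely different device, encoded in Lemma~\ref{lem:DJMsmallspec}: one first chooses finitely many translates $b^{(j)}=b+c_j$ so that $\bigcap_j \sigma(J_{a,b^{(j)}})=\emptyset$, which forces $\max_j L(E,a,b^{(j)})\ge\eta>0$ for every $E$; then one \emph{concatenates} long blocks of the $b^{(j)}$ into a single $Np$-periodic sequence $\widetilde b$. The IDS estimate (Theorem~\ref{t:per:ids:hilbschmidt}) converts the resulting large transfer-matrix norms into $dk/dE \gtrsim e^{cNp}$, hence every band has length $\lesssim e^{-cNp}$ while there are only $Np$ bands. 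This is the step your proposal is missing; the references you cite for the ``Chebyshev-type'' argument in fact use this concatenation trick, not discriminant manipulation.
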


It is straightforward to see that Theorem~\ref{t:genericZMspecFixedOffD} implies Theorem~\ref{t:genericZMspec} and that Theorem~\ref{t:denseZHDimspecFixedOffD} implies Theorem~\ref{t:denseZHDimspec}, so we focus on the proof of Theorems~\ref{t:genericZMspecFixedOffD} and \ref{t:denseZHDimspecFixedOffD}. The proof revolves around a perturbative construction pioneered by Avila \cite{A09} and later generalized by Damanik-Fillman-Lukic \cite{DFL2017}.

Given the construction of Theorem~\ref{t:genericZMspecFixedOffD}, it is natural to wonder about the complementary perturbative argument, namely, what one can say for fixed $b$ and varying $a$. To that end, we consider off-diagonal $J$, that is, $J$ such that $b \equiv 0$. We will write $J_a = J_{a,0}$.
We again restrict to the bounded case for which one has
\begin{equation}\label{eq:langlearangledef} \langle J \rangle = \langle a \rangle:= \max\left\{ \|a\|_\infty, \|a^{-1}\|_\infty\right\} \leq M< \infty.\end{equation}
Writing $\R_+=(0,\infty)$, we shall abuse notation and identify a $p$-periodic $a \in \R_+^\Z$ with a corresponding element $a \in \R_+^p$ in the obvious manner. Given $M > 1$, let $\LP_M^+$ denote the set of positive limit-periodic sequences $a$ such that $\langle a \rangle \leq M$, that is, $M^{-1}\leq a_n \leq M$ for all $n$.

\begin{theorem} \label{t:genericZMspecFixedDiag}
Let $M > 1$. For generic $a \in \LP_M^+$, $\sigma(J_a)$ is a Cantor set of zero Lebesgue measure, and the spectral type of $J_{a}$ is purely singular continuous.
\end{theorem}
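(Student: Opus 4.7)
The plan is to adapt the perturbative framework of Avila \cite{A09} and Damanik--Fillman--Lukic \cite{DFL2017}, which underlies Theorems~\ref{t:genericZMspecFixedOffD}--\ref{t:denseZHDimspecFixedOffD}, to the more rigid setting in which $b$ is held fixed at zero. The argument has the usual two-part structure: one shows that ``zero-measure Cantor spectrum together with purely singular continuous spectral type'' is a $G_\delta$ condition on $\LP_M^+$, and then one establishes density; Baire category then yields a residual set with the desired properties.

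The $G_\delta$ part is largely standard. Upper semicontinuity of $a \mapsto \Leb(\sigma(J_a))$ in the operator norm shows that $\{a : \Leb(\sigma(J_a)) < 1/n\}$ is open for each $n$, making the zero-measure condition $G_\delta$; this already forces empty interior. Absence of isolated points is implied by absence of eigenvalues, since an isolated point of the spectrum is necessarily an eigenvalue. Pure singular continuity is then obtained from a Simon Wonderland-type argument: both ``no absolutely continuous spectrum'' and ``no eigenvalues'' are $G_\delta$ properties; the former is a consequence of the (dense) zero-measure condition, and the latter is dense because the periodic elements of $\LP_M^+$, which have purely absolutely continuous spectrum, are themselves dense in $\LP_M^+$.

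Density of the zero-measure condition reduces to the following off-diagonal gap-opening lemma: for any $p$-periodic $a^{(0)} \in \LP_M^+$ and any $\epsilon > 0$, there exist an integer $q \geq 1$ and a $qp$-periodic $a^{(1)} \in \LP_M^+$ with $\|a^{(1)} - a^{(0)}\|_\infty < \epsilon$, $\Leb(\sigma(J_{a^{(1)}})) < \epsilon$, and gaps of $\sigma(J_{a^{(1)}})$ that are quantitatively stable under further small perturbations. Iterating produces a Cauchy sequence of periodic positive sequences converging in operator norm to a limit-periodic $a^{(\infty)} \in \LP_M^+$ with zero-measure spectrum, and running this construction inside an arbitrary $\epsilon$-ball around a periodic basepoint yields density.

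The main obstacle is the gap-opening lemma itself. With $b$ fixed at zero, all perturbations must be realized as multiplicative modulations $a_n \mapsto (1+\epsilon_n)a_n$ of the off-diagonal sequence that preserve positivity and the bound $\langle a\rangle \leq M$. The spectrum of a $qp$-periodic $J_{\tilde a}$ is $\{E : |\Delta_{\tilde a}(E)| \leq 2\}$ for the discriminant $\Delta_{\tilde a}(E) = \tr M_{\tilde a}(E)$ associated to the $qp$-step transfer matrix $M_{\tilde a}(E) \in \SL(2,\R)$; for the unperturbed $q$-fold repetition of $a^{(0)}$ one has $\Delta_{\tilde a}(E) = 2T_q(\Delta_{a^{(0)}}(E)/2)$ with $T_q$ the Chebyshev polynomial of the first kind, so that the sublevel set $\{|\Delta_{\tilde a}|\leq 2\}$ still coincides with $\sigma(J_{a^{(0)}})$ and no new gaps have opened. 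A carefully chosen small multiplicative modulation over the $qp$ fundamental domain must then detune these $q$ overlapping copies so as to push $\Delta$ outside $[-2,2]$ on all but an $\epsilon$-measure set of energies. Verifying that such a modulation can be performed within the constraints of positivity and $\langle \cdot\rangle \leq M$, and that the resulting gaps are quantitatively stable throughout the induction, is the place where the off-diagonal restriction forces substantive new work beyond the proof of Theorem~\ref{t:denseZHDimspecFixedOffD}.
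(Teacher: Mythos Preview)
Your outline is sound and matches the paper's Baire-category skeleton: the paper also reduces everything to a small-spectrum lemma (Lemma~\ref{lem:ODJMsmallspec}) that produces, near any periodic $a$, a longer-period $\widetilde a$ with $\Leb(\sigma(J_{\widetilde a}))<e^{-cNp}$. Two points of genuine divergence are worth flagging.

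For the spectral type, the paper does not use a Wonderland argument. It instead exhibits the set of Gordon-type sequences in $\LP_M^+$ as an explicit dense $G_\delta$ and invokes a Jacobi-matrix Gordon lemma to exclude eigenvalues; absence of isolated points then comes from Pastur's theorem rather than from absence of eigenvalues. Your Wonderland route is a legitimate alternative, and arguably cleaner once one checks norm-continuity of $a\mapsto J_a$.

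The more substantive difference is in the mechanism for the key lemma. Your sketch---view the $qp$-periodic discriminant as $2T_q(\Delta_{a^{(0)}}/2)$ and then ``detune'' by a small multiplicative modulation to open the collapsed gaps---is not how the paper proceeds, and it is not clear this picture can be made quantitative enough to force $\Leb(\sigma)<\epsilon$ for arbitrary $\epsilon$ with a \emph{small} modulation. The paper instead argues via Lyapunov exponents. One first passes to a $2N'p$-periodic $a''$ with all gaps open and $0\notin\sigma(J_{a''})$ (the latter is crucial and uses Lemma~\ref{lem:ODJM0inspec}); then one builds a finite family of \emph{scalar} dilates $a^{(k)}=\Lambda^{k/K}a''$ with $\bigcap_k\sigma(J_{a^{(k)}})=\emptyset$, exploiting that $\sigma(J_{\lambda a})=\lambda\sigma(J_a)$ and controlling the band-endpoint ratios $E_n^+/E_n^-$ via the exclusion of $0$ and the symmetry $\sigma(J_a)=-\sigma(J_a)$. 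Concatenating long blocks of the $a^{(k)}$ into one periodic $\widetilde a$ guarantees that at every energy some $L(E,a^{(k)})\ge\eta>0$, which forces transfer-matrix growth across the corresponding block and, through the IDS bound of Theorem~\ref{t:per:ids:hilbschmidt}, yields the exponential measure estimate. This Lyapunov/concatenation mechanism is the ``substantive new work'' you allude to in your final paragraph, and it looks rather different from a gap-opening perturbation of a Chebyshev discriminant.

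A minor point: the iteration you describe is more than Theorem~\ref{t:genericZMspecFixedDiag} requires. One application of the small-spectrum lemma already shows each $\{a:\Leb(\sigma(J_a))<\delta\}$ is dense and open, and Baire finishes; the iteration is what the paper reserves for Theorem~\ref{t:denseZHDimspecFixedDiag}.
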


\begin{theorem} \label{t:denseZHDimspecFixedDiag}
Let $M > 1$. For a dense set of $a \in \LP_M^+$, $\sigma(J_a)$ is a Cantor set of zero lower box counting dimension, and the spectral type of $J_{a}$ is purely singular continuous.
\end{theorem}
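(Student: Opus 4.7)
The strategy is to enhance the construction used in the proof of Theorem~\ref{t:genericZMspecFixedDiag} by imposing quantitative control on the band structure of the periodic approximants. Fix $a \in \LP_M^+$ and $\epsilon > 0$; the goal is to produce $\tilde a \in \LP_M^+$ with $\|\tilde a - a\|_\infty < \epsilon$ such that $\sigma(J_{\tilde a})$ has zero lower box counting dimension and $J_{\tilde a}$ has purely singular continuous spectrum. One begins with a periodic $a^{(0)} \in \LP_M^+$ satisfying $\|a^{(0)}-a\|_\infty < \epsilon/2$, and then constructs inductively a sequence of periodic elements $a^{(n)} \in \LP_M^+$ with periods $p_0 \mid p_1 \mid p_2 \mid \cdots$ and $\|a^{(n+1)} - a^{(n)}\|_\infty < \epsilon_n$, where $\sum_n \epsilon_n < \epsilon/2$. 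The limit $\tilde a := \lim_n a^{(n)}$ is then limit-periodic with $\|\tilde a - a\|_\infty < \epsilon$.

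The engine of the inductive step is a gap-opening lemma adapted to the off-diagonal setting: given $p$-periodic $a \in \LP_M^+$ and $\eta > 0$, one produces $a'$ of period $qp$ for some $q \in \N$, with $a' \in \LP_M^+$ and $\|a'-a\|_\infty < \eta$, such that $\sigma(J_{a'})$ is covered by a collection of $N(a')$ intervals for which the ratio $\log N(a') / \log(1/\ell(a'))$ is arbitrarily small, where $\ell(a')$ is the length of the shortest band. This is the natural analogue in the present setting of the perturbative schemes of Avila \cite{A09} and Damanik--Fillman--Lukic \cite{DFL2017}. The principal obstacle is that uniform scaling $a \mapsto (1+\delta) a$ merely rescales the spectrum without creating new gaps, so one must exploit genuinely non-uniform perturbations: the analysis reduces to tracking how small non-constant changes in $a$ move the discriminant $\Delta_{qp}(E) = \tr T_{qp}(E)$ of the monodromy across the threshold $|\Delta_{qp}| = 2$ at a controlled set of energies, and ensuring that the resulting band structure can be made as ``fractal'' as desired, subject to the positivity constraint $a_n > 0$ and the two-sided bounds imposed by $\LP_M^+$.

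With this in hand, upper semi-continuity of the spectrum under norm convergence shows that the coverings of $\sigma(J_{a^{(n)}})$ from the inductive step also cover $\sigma(J_{\tilde a})$, so a careful schedule of parameters forces the lower box counting dimension of $\sigma(J_{\tilde a})$ to vanish. Singular continuity of the spectral type is handled exactly as in the proof of Theorem~\ref{t:genericZMspecFixedDiag}: zero Lebesgue measure of the spectrum rules out absolutely continuous spectrum, while the long repeating blocks in $\tilde a$ generated by the period-refinement scheme yield, via a Gordon-type estimate on transfer matrices, the absence of $\ell^2$ eigenfunctions. The delicate point throughout is coordinating the three schedules---the gap-opening parameters, the convergence rate $\epsilon_n$, and the period multipliers $q_n$---so that the quantitative dimension bound and the Gordon condition are simultaneously satisfied at every stage.
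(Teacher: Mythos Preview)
Your high-level architecture is exactly that of the paper: choose a periodic $a^{(0)}$ near $a$, iterate a ``small-spectrum'' lemma (your gap-opening lemma, the paper's Lemma~\ref{lem:ODJMsmallspec}) to produce periodic $a^{(n)}$ with periods $p_0\mid p_1\mid\cdots$, control $\varepsilon_n$ so that the limit $\tilde a$ is simultaneously well approximated by its periodic approximants (for the box-counting covering argument) and satisfies a Gordon condition (for absence of point spectrum), and conclude. So the skeleton is right.

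Where your proposal is thin is precisely at the ``engine,'' and your one concrete remark about it points in the wrong direction. You write that uniform scaling $a\mapsto(1+\delta)a$ is useless because it only rescales the spectrum, and that therefore one must rely on ``genuinely non-uniform perturbations'' that push the discriminant across $\pm 2$. In fact the paper's mechanism is the opposite: after first shrinking band lengths (by passing to a long period; Corollary~\ref{coro:bandLengthLinUB}) and opening all gaps (Lemma~\ref{lem:odjmGenericGapsOpen}), the crucial step (Claim~\ref{claim:GapsCoverAllSpectrum}) uses a finite family of \emph{uniform scalings} $a^{(k)}=\Lambda^{k/K}a''$ precisely because $\sigma(J_{\lambda a})=\lambda\sigma(J_a)$: the scalings slide the short bands so that the intersection $\bigcap_k\sigma(J_{a^{(k)}})$ is empty. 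One then \emph{concatenates} the $a^{(k)}$ over a long block---this is the only non-uniform ingredient---so that at every energy some block contributes exponential growth, and the IDS/Lyapunov estimate of Theorem~\ref{t:per:ids:hilbschmidt} converts this into the bound $\Leb(\sigma(J_{\tilde a}))\le e^{-cNp}$. Your description (``tracking how the discriminant crosses the threshold'') reads more like the gap-opening step of Lemma~\ref{lem:odjmGenericGapsOpen} than the actual small-spectrum mechanism, and does not by itself yield the exponential band-length decay you need. One minor slip: in your formulation of the lemma, $\ell(a')$ should be the \emph{maximum} band length (or the total measure), not the shortest, since the covering-number estimate at scale $\ell$ requires an upper bound on interval lengths.
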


Let us remark that $J_{\lambda a} = \lambda J_a$, so $\sigma(J_{\lambda a,0}) = \lambda \sigma(J_{a,0})$ and the spectral type is $J_{\lambda a}$ is the same for all $\lambda \not= 0$; thus, one could add a coupling constant to the previous results for no extra effort.

For this result, the perturbative arguments of \cite{A09, DFL2017} must be modified to a multiplicative perturbation rather than an additive one, which leads to several additional complications. We are able to overcome these complications for $b\equiv 0$, but they are so far insurmountable for nonconstant periodic background. We regard it as an interesting open problem to establish full analogs of Theorems~\ref{t:genericZMspecFixedOffD} and \ref{t:denseZHDimspecFixedOffD}, that is, fixing an arbitrary periodic diagonal sequence and then proving the desired statements for a residual or dense set of off-diagonal sequences $a \in \LP_M^+$.\footnote{While this paper was in production, the paper \cite{EFGL2022JFA} appeared, giving a new approach to limit-periodic problems. We expect that the methods of that work can be fruitfully applied in the present context.}

To conclude, we note that the dimensional results (in particular the results about the lower box dimension) can yield examples of weighted Laplace operators on $\ell^2(\Z^d)$ with similar spectral properties. Concretely, given positive weights $\{w_{\bm n, \bm m}\}_{\bm n,\bm m \in \Z^d , \|\bm n - \bm m\|_1=1}$, the corresponding weighted Laplacian $L=L_w$ on $\ell^2(\Z^d)$ is defined by
\begin{equation}
[L_wu]_{\bm n} = \sum_{\|\bm m - \bm n\|_1=1} w_{\bm n,\bm m} u_{\bm m}. \end{equation}
We assume $M^{-1}\leq w_{\bm n,\bm m} \leq M$ and $w_{\bm n,\bm m}=w_{\bm n,\bm m}$ for all $\bm n,\bm m \in \Z^d$, which implies that $L_w$ is a bounded self-adjoint operator on $\ell^2(\Z^d)$.
Naturally, we say that $w$ is periodic if there is a full-rank subgroup $G\subseteq \Z^d$ such that $w_{\bm n + \bm g,\bm m + \bm g} = w_{\bm n,\bm m}$ for all $\bm n , \bm m \in \Z^d$, $\bm g \in G$. We say that $w$ is limit-periodic if it is a uniform limit of periodic weights.

\begin{theorem} \label{t:weightedL}
For any $d \in \N$, there exist limit-periodic weights $w$ such that  $L_w$ has purely singular continuous spectrum and $\sigma(L_w)$ is a Cantor set of zero lower box-counting dimension.
\end{theorem}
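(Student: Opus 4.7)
The plan is to reduce the $d$-dimensional claim to the one-dimensional Theorem~\ref{t:denseZHDimspecFixedDiag} via a Kronecker-sum construction. Fix $a\in\LP_M^+$ as produced by that theorem, so that $J_a$ on $\ell^2(\Z)$ has purely singular continuous spectrum with $\sigma(J_a)$ a Cantor set of zero lower box-counting dimension. Define weights on $\Z^d$ by setting $w_{\bm n,\bm n+e_i}:=a_{n_i}$ and $w_{\bm n,\bm n-e_i}:=a_{n_i-1}$ for each coordinate direction. If $a^{(k)}\to a$ uniformly with $a^{(k)}$ periodic of period $p_k$, the associated weights $w^{(k)}$ are periodic under the full-rank subgroup $(p_k\Z)^d\subset\Z^d$ and converge uniformly to $w$, so $w$ is limit-periodic in the sense required. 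Under the canonical isomorphism $\ell^2(\Z^d)\cong\bigotimes_{i=1}^d\ell^2(\Z)$, $\delta_{\bm n}\mapsto\delta_{n_1}\otimes\cdots\otimes\delta_{n_d}$, a direct computation identifies
$$ L_w \;\cong\; \sum_{i=1}^d I\otimes\cdots\otimes J_a\otimes\cdots\otimes I, $$
with $J_a$ in the $i$-th factor. The $d$ summands are commuting bounded self-adjoint operators.

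By the functional calculus for commuting self-adjoint operators, $\sigma(L_w)$ equals the $d$-fold Minkowski sum $\Sigma:=\sigma(J_a)+\cdots+\sigma(J_a)$. I would then check the three defining properties of a Cantor set: compactness is automatic; perfectness follows because every $x=y_1+\cdots+y_d\in\Sigma$ lies in the translate $\sigma(J_a)+(y_2+\cdots+y_d)\subset\Sigma$, and $y_1$ is non-isolated in $\sigma(J_a)$; and the covering estimate $N_{d\varepsilon}(\Sigma)\le N_\varepsilon(\sigma(J_a))^d$ gives $\underline{\dim}_B(\Sigma)=0$ upon extracting a single sequence $\varepsilon_k\to 0$ along which $\log N_{\varepsilon_k}(\sigma(J_a))/\log(1/\varepsilon_k)\to 0$. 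In particular $\Sigma$ has zero Lebesgue measure and hence empty interior.

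For the spectral type, every spectral measure of $L_w$ is supported on $\Sigma$ and hence singular, so it remains to rule out eigenvalues. The joint projection-valued measure $E$ of the commuting family $\{I\otimes\cdots\otimes J_a\otimes\cdots\otimes I\}_{i=1}^d$ factors as a tensor product of the individual PVMs of $J_a$, and the spectral projection of $L_w$ onto the singleton $\{\lambda\}$ equals $E(H_\lambda)$ with $H_\lambda:=\{x\in\R^d:x_1+\cdots+x_d=\lambda\}$. Evaluating the matrix element $\langle E(H_\lambda)\phi,\psi\rangle$ on simple tensors yields a $d$-fold product of scalar complex spectral measures of $J_a$; since $J_a$ has no point spectrum these are atomless, and Fubini forces the integral over $H_\lambda$ to vanish. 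Density of simple tensors in $\ell^2(\Z^d)$ then gives $E(H_\lambda)=0$, i.e.\ $L_w$ has no eigenvalues, so its spectrum is purely singular continuous.

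The main technical subtlety is the lower box-counting dimension step, since lower box dimension is not generally subadditive under Minkowski sums. The argument succeeds here precisely because the same sequence $a$ is used along every coordinate direction, so a single sequence of scales simultaneously witnesses zero-dimensionality of every factor, allowing the sub-multiplicative covering bound to pass cleanly to the liminf; any attempt to mix distinct sequences in different coordinates would require control of the upper box dimension of all but one factor, which we do not have.
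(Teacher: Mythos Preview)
Your proof is correct and follows the same approach as the paper: both invoke Theorem~\ref{t:denseZHDimspecFixedDiag} to obtain a suitable one-dimensional $J_a$, define the weights coordinatewise, identify $L_w$ as a Kronecker sum so that $\sigma(L_w)$ is the $d$-fold Minkowski sum of $\sigma(J_a)$, and read off the conclusions. The paper's proof is terse (it ends with ``the result follows immediately'' after recording the sumset identity), whereas you have correctly supplied the details it omits---the limit-periodicity of $w$, the Cantor and zero-lower-box-dimension verifications (including the key point that a common scale sequence works because the same $a$ is used in every coordinate), and the Fubini argument ruling out eigenvalues.
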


\subsection{Organization of the Paper}

The paper is organized as follows. In Section~\ref{sec:thin} we show how to construct periodic Jacobi matrices with thin spectra close to an arbitrary starting point. This is used to prove the statements about thin Cantor spectra (i.e., Theorems~\ref{t:genericZMspec}, \ref{t:denseZHDimspec}, and \ref{t:genericZMspecFixedOffD}--\ref{t:weightedL}) in Section~\ref{sec:cantor}.

The paper concludes an appendix containing results that are likely known to experts, but which we could not find in the form needed in this paper in the literature. Specifically, Appendix~\ref{sec:perIDS} works out a formula for the IDS of the density of states of a periodic Jacobi matrix (needed in Section~\ref{sec:thin}) in the spirit of Avila \cite{A09}.

\section{Spectral Estimates for Periodic Jacobi Matrices} \label{sec:thin}

In this section our primary goal is to produce thin spectra for periodic Jacobi matrices, as this is naturally a useful tool in constructing limit-periodic Jacobi matrices with thin spectra via suitable approximants of this kind. The way this is accomplished is via transfer matrix norm estimates. We therefore begin by recalling the concept of transfer matrices for general Jacobi matrices and then specialize to the case of periodic coefficients.

\medskip

Let $J = J_{a,b}$ be a bounded Jacobi matrix. If $u:\Z \to \C$ is a formal solution to $Ju = zu$ for a scalar $z \in \C$, one can recover the values of $u$ from any two consecutive values (say $u(0)$ and $u(1)$) via the transfer matrix formalism:
\begin{equation}\label{e.transfermatrix1}
\begin{bmatrix}u(n+1) \\ a_n u(n) \end{bmatrix} = A_z(n,m) \begin{bmatrix} u(m+1) \\ a_m u(m) \end{bmatrix}.
\end{equation}
Since the solution space is two-dimensional, the matrices $A_z(n,m)$ are uniquely determined by the mapping property expressed through \eqref{e.transfermatrix1} and they can be explicitly described as follows. In the case $n > m$, we have
\begin{equation}\label{e.transfermatrix2}
A_z(n,m) = \underbrace{\frac{1}{a_n} \begin{bmatrix} z-b_n & -1 \\ a_n^2 & 0 \end{bmatrix}}_{:= T_z(n)} \frac{1}{a_{n-1}} \begin{bmatrix} z - b_{n-1} & -1 \\ a_{n-1}^2 & 0 \end{bmatrix} \cdots \frac{1}{a_{m+1}} \begin{bmatrix} z - b_{m+1} & -1 \\ a_{m+1}^2 & 0 \end{bmatrix},
\end{equation}
and in the case $n < m$, we have $A_z(n,m) = A_z(m,n)^{-1}$. Naturally, we also have $A_z(n,n) = I$.

\medskip

Let us now assume that $J$ is $p$-periodic for $p \in \N$, that is, $a_{n+p} = a_n$ and $b_{n+p} = b_n$ for every $n \in \Z$. In this case we let $\Phi(z) = \Phi(z,p) := A_z(p,0)$ denote the \emph{monodromy matrix}. By the general theory of periodic Jacobi matrices (for which the reader may consult, e.g., \cite{simszego}), we can describe the spectrum of $J$ via the trace of the monodromy matrix:
\begin{equation}\label{e.periodicspectrum}
\sigma(J) = \{E \in \R : \tr \Phi(E) \in [-2,2]\}.
\end{equation}
As $\Phi$ is a polynomial of degree $p$, it follows from \eqref{e.periodicspectrum} that $\sigma(J)$ has at most $p-1$ gaps. In fact, according to standard parlance the set always has precisely $p-1$ gaps, although some of them may be closed. Namely, it can be shown that the set $\{E \in \R : \tr \Phi(E) \in (-2,2)\}$ has precisely $p$ connected components, separated by what are referred to as \emph{gaps}. A \emph{band} of $\sigma(J)$ is the closure of a connected component of $\{E \in \R : \tr \Phi(E) \in (-2,2)\}$, so there are precisely $p$ bands. The bands and gaps are naturally numbered from left to right, so one can speak of the $j$-th band ($1 \le j \le p$) and the $j$-th gap ($1 \le j \le p-1$). From this perspective, the $j$-th gap is closed if the right endpoint of the $j$-th band coincides with the left endpoint of the $(j+1)$-st band; otherwise one refers to the $j$-th gap as an open gap (which is then taken to be the open interval determined by these two points).

It follows from this discussion that at the endpoints of bands, we must have $\tr \Phi(E) = \pm 2$. This is then in particular true at every closed gap, and we note that $E$ corresponds to a closed gap if and only if $\Phi(E) = I$ or $\Phi(E)=-I$.

We also note that the limit
\begin{equation}\label{eq:LEdef} \lim_{n\to\infty} \frac{1}{n} \log\|A_z(n,0)\| = L(z) \end{equation}
exists and by Gelfand's formula is exactly given by
\begin{equation} \label{eq:LEtospr} L(z) = \frac{1}{p} \log \mathrm{spr}\, \Phi(z),\end{equation}
where $\mathrm{spr}$ denotes the spectral radius. This is called the \emph{Lyapunov exponent} and can be defined for more general classes of Jacobi matrices, but we only need the periodic case for the current manuscript. In view of \eqref{e.periodicspectrum}, we note that
\begin{equation} \label{eq:perspecLE}
\sigma(J) = \{E \in \R : L(E) = 0\}.
\end{equation}
Due to \eqref{eq:LEtospr}, we note that in this periodic setting, $L$ is continuous as a function of $E$ and also as a function of $(a,b) \in \R_+^p \times \R^p$. Note however the restriction to a fixed period with regard to continuity as a function of $(a,b)$.

We refer the reader to \cite{simszego} for more information on periodic Jacobi matrices. In addition, Appendix~\ref{sec:perIDS} of the present paper contains some results (specifically Theorem~\ref{t:per:ids:hilbschmidt} and Corollary~\ref{coro:bandLengthLinUB}) not covered in \cite{simszego} but needed in our work.

\medskip

Recall that our goal is to show that the spectrum of a suitable periodic Jacobi matrix is thin in the sense of Lebesgue measure. We first work out a way of establishing this in the off-diagonal case, as this case will require some novel ideas (the Schr\"odinger case had been studied before). So, we consider a $p$-periodic off-diagonal Jacobi matrix $J = J_a$. The transfer matrices now take the form
\begin{equation}\label{e.transfermatrix2od}
A_z(n,m) = \frac{1}{a_n} \begin{bmatrix} z & -1 \\ a_n^2 & 0 \end{bmatrix} \frac{1}{a_{n-1}} \begin{bmatrix} z & -1 \\ a_{n-1}^2 & 0 \end{bmatrix} \cdots \frac{1}{a_{m+1}} \begin{bmatrix} z & -1 \\ a_{m+1}^2 & 0 \end{bmatrix}
\end{equation}
when $n > m$. Recalling the Lyapunov exponent from \eqref{eq:LEdef}, we will sometimes write $L(z,a)$ to emphasize the dependence on $a$. Abusing notation, we parametrize $J_a$ by $a \in \R_+^p$.
Let us note that for any bounded\footnote{In particular, this statement requires only boundedness, not periodicity or limit-periodicity.} $a,\widetilde{a} \in \ell^\infty(\Z)$, one can check that
\begin{equation}\label{eq:JdistFromaDist}
\|J_a - J_{\widetilde{a}}\| \leq 2\|a-\widetilde{a}\|_\infty.
\end{equation}
This is useful because
\begin{equation} \label{eq:specHdDist}
d_{\rm Hd}(\sigma(A),\sigma(B)) \leq \|A-B\|
\end{equation}
whenever $A$ and $B$ are self-adjoint operators and $d_{\rm Hd}$ denotes the Hausdorff metric given by
\begin{equation} \label{eq:hdmetric:def}
d_{\rm Hd}(K_1,K_2)
:=
\inf\{ \varepsilon > 0 : F \subseteq B_\varepsilon(K)  \text{ and } K \subseteq B_\varepsilon(F) \},
\end{equation}
whenever $K_1$ and $K_2$ are nonempty compact subsets of $\R$, and in which $B_\varepsilon(X)$ denotes the open $\varepsilon$-neighborhood of the set $X \subseteq \R$. Working with and combining \eqref{eq:JdistFromaDist}, \eqref{eq:specHdDist}, and \eqref{eq:hdmetric:def} enables one to keep track of how the spectrum changes as we perturb the off-diagonals $a \in \ell^\infty(\Z)$.

 The first result shows that all gaps are generically open.

\begin{lemma} \label{lem:odjmGenericGapsOpen}
Let $p \in \N$ be given. For a dense open set of $a \in \R_+^p$, $\sigma(J_a)$ has $p-1$ open gaps.
\end{lemma}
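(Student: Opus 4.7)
The plan is to establish that $\mathcal{O} := \{a \in \R_+^p : \sigma(J_a) \text{ has } p-1 \text{ open gaps}\}$ is both open and dense in $\R_+^p$.

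For openness, I would exploit continuous dependence of the trace polynomial $\Delta_a(E) := \tr \Phi(E,a)$ on $a$. By \eqref{e.transfermatrix2od}, $\Delta_a$ is a polynomial in $E$ of degree exactly $p$ whose coefficients are smooth functions of $a \in \R_+^p$. In view of \eqref{e.periodicspectrum}, membership in $\mathcal{O}$ is equivalent to requiring that both $\Delta_a - 2$ and $\Delta_a + 2$ have $p$ distinct real roots, which is precisely the analytic statement that no gap is closed. Simplicity of roots of a polynomial is preserved under small perturbations of its coefficients, so $\mathcal{O}$ is open.

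For density, I would realize $\mathcal{O}^c$ as (a subset of) the zero locus of a polynomial on $\R_+^p$. A gap is closed at $E_0$ iff $E_0$ is a multiple root of $\Delta_a \mp 2$, which happens iff the product of discriminants $\mathrm{disc}_E(\Delta_a - 2)\cdot\mathrm{disc}_E(\Delta_a + 2)$ vanishes. Since the coefficients of $\Delta_a$ are rational functions of $a$ whose denominators are powers of $a_1\cdots a_p$, clearing denominators produces a polynomial $P(a)$ on $\R_+^p$ whose zero set contains $\mathcal{O}^c$. As $\R_+^p$ is connected, either $P \equiv 0$, or $\{P \ne 0\}$ is an open dense subset of $\R_+^p$ contained in $\mathcal{O}$, whence $\mathcal{O}$ is dense.

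Ruling out $P \equiv 0$ amounts to exhibiting one $a^* \in \R_+^p$ with all $p-1$ gaps open, and this I regard as the main obstacle. For $p = 2$ an elementary computation via \eqref{e.transfermatrix2od} yields $\sigma(J_a) = [-a_1 - a_2, -|a_1 - a_2|] \cup [|a_1 - a_2|, a_1 + a_2]$, so any $a_1^* \ne a_2^*$ works; for $p = 3$ the analogous calculation combined with AM-GM shows that any non-constant $a^*$ suffices. For general $p$ I would invoke the inverse spectral theory of periodic Jacobi matrices (see \cite{simszego}), which permits prescribing any symmetric configuration of $p$ disjoint bands as the spectrum of a $p$-periodic off-diagonal $J_{a^*}$. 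A more self-contained alternative is an inductive perturbation argument: a short trace computation using $T_i^{-1}\partial_{a_i}T_i$ and cyclicity shows that the first-order variation $\partial_{a_i}\Delta_a(E_0)$ vanishes whenever $\Phi(E_0) = \pm I$, forcing one to pass to second order, where a Morse-type analysis in the spirit of the $p = 3$ case opens closed gaps one at a time while preserving gaps that are already open (using the openness step).
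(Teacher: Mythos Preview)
Your openness argument is fine and matches the paper's. Your density argument via the discriminant polynomial $P(a)$ is a legitimate alternative strategy, but it leaves the ``main obstacle'' unresolved: neither of your two suggestions for producing a single $a^*$ with all gaps open is complete. The appeal to inverse spectral theory is delicate in the off-diagonal setting $b\equiv 0$ (the standard torus picture in \cite{simszego} parametrizes general $(a,b)$, and restricting to $b\equiv 0$ requires additional work you have not supplied). Your perturbative alternative correctly observes that $\partial_{a_i}\Delta_a(E_0)=\pm\tr(T_i^{-1}\partial_{a_i}T_i)=0$ when $\Phi(E_0)=\pm I$, but the promised ``Morse-type analysis'' at second order is only asserted, not carried out.

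The paper bypasses this entirely with a trick you should know: rather than differentiating, it computes the perturbed monodromy \emph{exactly} at the closed-gap energy. If $\Phi(E_0)=\pm I$, then $T_{p-1}(E_0)\cdots T_1(E_0)=\pm T_p(E_0)^{-1}$, so replacing only $a_p$ by $\widetilde a_p$ gives
\[
\widetilde\Phi(E_0)=\widetilde T_p(E_0)\,T_p(E_0)^{-1}\cdot(\pm I)
=\pm\frac{1}{a_p\widetilde a_p}\begin{bmatrix}a_p^2&0\\0&\widetilde a_p^2\end{bmatrix},
\]
whose trace $\pm(a_p/\widetilde a_p+\widetilde a_p/a_p)$ lies outside $[-2,2]$ whenever $\widetilde a_p\neq a_p$. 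Thus $E_0$ lies in an open gap of $J_{\widetilde a}$ for \emph{every} such perturbation, so each $U_j=\{a:\text{$j$th gap open}\}$ is dense (and open), and $\bigcap_j U_j$ finishes the proof. This is an exact, non-perturbative computation that circumvents the vanishing first variation you noticed and avoids any need to exhibit a global example or invoke inverse theory.
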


\begin{proof}
Since the gap boundaries vary continuously in the parameters, the set of $a \in \R_+^p$ for which $\sigma(J_a)$ has $p-1$ open gaps is open, so we only need to explain why it is dense.

Suppose the $j$-th gap of $\sigma(J_a)$ is closed and equal to $\{E\}$, so that $\Phi(E) = \pm I$. For $\widetilde a_p \neq a_p$ close to $a_p$, and $\widetilde a_j = a_j$ for $1 \le j \le p-1$, consider the resulting $\widetilde a \in \R_+^p$. The associated monodromy is given by
\[\widetilde \Phi(E) = \pm \frac{1}{\widetilde a_p} \begin{bmatrix} E & -1 \\ \widetilde a_p^2 & 0 \end{bmatrix} \left[ \frac{1}{a_p} \begin{bmatrix} E & -1 \\ a_p^2 & 0 \end{bmatrix}\right]^{-1}
= \pm \frac{1}{a_p \widetilde a_p} \begin{bmatrix} a_p^2 & 0 \\ 0 & \widetilde a_p^2 \end{bmatrix}. \]
We have
$$
\mathrm{Tr} \, \widetilde \Phi(E) = \pm  \frac{a_p^2 + \widetilde a_p^2}{a_p \widetilde a_p} = \pm \left( \frac{a_p}{\widetilde a_p} + \frac{\widetilde a_p}{a_p}\right),
$$
which lies in $\R \setminus [-2,2]$, and hence $E$ belongs to a gap of $\sigma(J_{\widetilde a})$. Note that if the perturbation is small enough, we can be sure that this gap is indeed the $j$-th gap of $\sigma(J_{\widetilde a})$.

Denoting by $U_j\subseteq \R_+^p$ the set of those $a$ for which the $j$th gap is open, the preceding arguments imply $U_j$ is open and dense. Since the intersection of finitely many dense open sets is dense and open, $U_1 \cap \cdots \cap U_p$ is the desired dense open set.
\end{proof}

Because the perturbative construction has a multiplicative structure, the behavior of the spectrum near energy $0$ is particularly delicate. Accordingly, the following lemma will be useful. For $a \in \R_+^\Z$ periodic, write
\begin{equation} \label{eq:lambda0Def}
\lambda_0(a) = \min\{|E| : E \in \sigma(J_a)\}.
\end{equation}
\begin{lemma} \label{lem:ODJM0inspec}
Suppose $a \in \R_+^\Z$ is $p$-periodic. Then $0 \in \sigma(J_a)$ if and only if $p$ is odd or $p$ is even and
\begin{equation}\label{eq:0inspecCondition} \prod_{j=1}^{p/2} a_{2j} = \prod_{j=1}^{p/2} a_{2j-1}. \end{equation}
In particular, $\lambda_0(a)>0$ for a dense open set of $a \in \R_+^p$ whenever $p$ is even.
Moreover, $\sigma(J)$ is symmetric about zero. If $B_1,\ldots,B_p$ denote the bands of $\sigma(J)$ listed from left to right, then
\begin{equation} \label{eq:ODJMsymmetry}
B_{p+1-k}=-B_k
\end{equation}
for each $k$.
\end{lemma}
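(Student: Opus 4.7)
The plan is to analyze the monodromy $\Phi(0)$ directly, exploiting the fact that at $z = 0$ the transfer matrix $T_0(n) = \frac{1}{a_n}\begin{bmatrix} 0 & -1 \\ a_n^2 & 0 \end{bmatrix}$ is antidiagonal. A direct computation shows that consecutive pairs multiply to a diagonal matrix,
\[
T_0(n)\,T_0(n-1) = \begin{bmatrix} -a_{n-1}/a_n & 0 \\ 0 & -a_n/a_{n-1} \end{bmatrix},
\]
so when $p = 2k$ is even, grouping the factors of $\Phi(0) = T_0(p)T_0(p-1)\cdots T_0(1)$ into $k$ mutually commuting diagonal pairs yields
\[
\Phi(0) = (-1)^k \begin{bmatrix} X & 0 \\ 0 & X^{-1} \end{bmatrix}, \qquad X := \prod_{j=1}^{k} \frac{a_{2j-1}}{a_{2j}}.
\]
Then $|\tr \Phi(0)| = X + X^{-1} \ge 2$ by AM-GM, with equality if and only if $X = 1$, which is precisely \eqref{eq:0inspecCondition}; combined with \eqref{e.periodicspectrum} this gives the characterization in the even case. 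When $p$ is odd, $\Phi(0)$ is an odd product of antidiagonal matrices, hence itself antidiagonal, so $\tr \Phi(0) = 0 \in [-2,2]$ and $0 \in \sigma(J_a)$ automatically.

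The density statement for even $p$ then follows immediately: the failure set $\{a \in \R_+^p : \prod_j a_{2j-1} = \prod_j a_{2j}\}$ is a closed, codimension-one real-analytic hypersurface in $\R_+^p$, so its complement --- precisely the set where $\lambda_0(a) > 0$ --- is open and dense.

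For the symmetry I would introduce the unitary $D$ on $\ell^2(\Z)$ defined by $(D\psi)_n = (-1)^n \psi_n$. Checking \eqref{eq:intro:jacobidef} with $b \equiv 0$ term-by-term gives $D J_a D^* = -J_a$, since each off-diagonal factor picks up the sign $(-1)^n (-1)^{n\pm 1} = -1$; hence $J_a$ and $-J_a$ are unitarily equivalent and $\sigma(J_a) = -\sigma(J_a)$. For the band labeling, the bands are the closures of the $p$ maximal subintervals of $\{E : |\tr \Phi(E)| < 2\}$, a set now symmetric about zero; since the labeling is by position on the real line and $E \mapsto -E$ is order-reversing, it must permute the bands by $k \mapsto p+1-k$, giving $B_{p+1-k} = -B_k$.

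The whole argument is essentially a direct calculation; the only real care-point is bookkeeping the order in the transfer-matrix product (to ensure the pairing $(2j-1, 2j)$ rather than $(2j, 2j+1)$) and tracking the overall sign $(-1)^k$.
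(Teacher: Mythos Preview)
Your proof is correct and follows essentially the same approach as the paper: direct computation of the monodromy $\Phi(0)$ via the antidiagonal structure of the one-step matrices (the paper phrases the even case ``by induction'' where you group into commuting diagonal pairs, but this is the same computation), together with the sign-flip unitary $(D\psi)_n=(-1)^n\psi_n$ to obtain $DJ_aD^*=-J_a$ and hence the spectral symmetry. Your treatment of the band-labeling and of the density statement is in fact slightly more explicit than the paper's, which simply asserts that ``the remaining assertions hold'' once $J$ and $-J$ are seen to be unitarily equivalent.
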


\begin{proof}
The one-step transfer matrices at energy $0$ are of the form
$$
T_0(n) = \begin{bmatrix} 0 & -a_n^{-1} \\ a_n & 0 \end{bmatrix},
$$
so the monodromy is obtained by multiplying such matrices across one period.

If $p$ is odd, then one can check that the associated monodromy $\Phi(0)$ is of the form
\[\Phi(0) = \begin{bmatrix} 0 & * \\ * & 0 \end{bmatrix},\]
so $0 \in \sigma(J_a)$. If $p$ is even, then, by induction, one can see that
\[ \Phi(0) = \begin{bmatrix} (-1)^{p/2} r/s & 0 \\ 0 & (-1)^{p/2} s/r \end{bmatrix},\]
where \[r = \prod_{j=1}^{p/2} a_{2j-1}, \quad s= \prod_{j=1}^{p/2} a_{2j}.\]
In particular, $|\tr \Phi(0)| \geq 2$, with equality if and only if $r=s$, proving the first statement. For a given even $p$, the set of $a \in \R_+^p$ for which \eqref{eq:0inspecCondition} fails is readily seen to be open and dense, proving the second statement.

For the remaining statements, note that the operator $U:\ell^2(\Z)\to\ell^2(\Z)$ given by $[U\psi](n)=(-1)^n\psi(n)$ is unitary and $UJU^{*}=-J$. Thus, $J$ and $-J$ are unitarily equivalent, so the remaining assertions hold.
\end{proof}

The next lemma provides the key construction of operators with thin spectra.

\begin{lemma}\label{lem:ODJMsmallspec}
Suppose $a\in \R_+^\Z$ is $p$-periodic. For all $\varepsilon>0$, there exist constants $c,C>0$ depending only on $\varepsilon$, $p$, and $\|a\|$ such that the following holds true. For any $N \in \N$ with $N \geq C$, there exists $\widetilde{a} \in \R_+^\Z$ of period $Np$ such that $\|a-\widetilde{a}\|_\infty < \varepsilon$ and $$\Leb(\sigma(J_{\widetilde{a}})) < e^{-cNp}.$$\end{lemma}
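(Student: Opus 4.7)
My plan is to follow the perturbative template of \cite{A09, DFL2017}, adapted to the multiplicative off-diagonal setting, and then invoke the band-length estimates developed in Appendix~\ref{sec:perIDS}.

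\emph{Step 1 (opening all gaps).} First apply Lemma~\ref{lem:odjmGenericGapsOpen} to replace $a$, up to a perturbation of norm at most $\varepsilon/2$, by a $p$-periodic sequence (still denoted $a$) for which all $p-1$ gaps of $\sigma(J_a)$ are open, with sizes bounded below by some $\eta_0=\eta_0(\varepsilon,p,\|a\|_\infty)>0$. When $p$ is even, use Lemma~\ref{lem:ODJM0inspec} to additionally arrange $0\notin\sigma(J_a)$, which is convenient because the multiplicative perturbation is degenerate at $E=0$.

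\emph{Step 2 (perturbing to period $Np$).} Construct $\widetilde a$ of period $Np$ as a multiplicative perturbation $\widetilde a_n=a_n(1+\eta_n)$ with $|\eta_n|<\varepsilon/(2\|a\|_\infty)$, choosing the $Np$-periodic defect pattern $\eta$ to force uniform exponential growth of the $Np$-monodromy $A_E(Np,0)$ off the spectrum. Writing the monodromy as a product $A_E(Np,0)=B_N(E)\cdots B_1(E)$ of $N$ block transfer matrices in $\SL(2,\R)$, each a small perturbation of the $p$-monodromy $\Phi$ of $a$, the aim is to show that the Lyapunov exponent of $\widetilde a$ (given by \eqref{eq:LEtospr}) satisfies $L_{\widetilde a}(E)\ge c>0$ outside an exceptional set of Lebesgue measure $\le e^{-cNp}$, with $c$ depending only on $\varepsilon$, $p$, and $\|a\|_\infty$. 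The pattern $\eta$ should be chosen so as to destroy any intermediate $kp$-periodicity for $1\le k<N$, producing a genuinely hyperbolic cocycle on most of the energy axis.

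\emph{Step 3 (conclusion).} By \eqref{eq:perspecLE}, the spectrum $\sigma(J_{\widetilde a})=\{E:L_{\widetilde a}(E)=0\}$ lies in the exceptional set from Step~2. Corollary~\ref{coro:bandLengthLinUB} of Appendix~\ref{sec:perIDS} then provides the quantitative per-band estimates needed to convert the Lyapunov control into the sharp bound $|B_j|\le Ce^{-cNp}/(Np)$ for each of the at most $Np$ bands. Summing over bands and absorbing polynomial factors into constants yields $\Leb(\sigma(J_{\widetilde a}))<e^{-c'Np}$ for $N$ sufficiently large, as required.

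The main obstacle is the Lyapunov lower bound in Step~2. In the additive Schrödinger setting of \cite{A09, DFL2017}, one adds small diagonal constants at strategically placed positions per block, and the resulting additive change in the discriminant $\tr\Phi$ can be analyzed by standard trace perturbation. Here the off-diagonal perturbation enters multiplicatively---through both the $1/a_n$ prefactor and the $(2,1)$-entry of each one-step transfer matrix $T_z(n)$---so its effect on the monodromy depends nonlinearly on the background $a_n$, and the positivity constraint $\widetilde a_n>0$ further restricts the allowed defects. Handling this carefully, especially near $E=0$ when $0\in\sigma(J_a)$ (which is why Lemma~\ref{lem:ODJM0inspec} is proved beforehand), is the crux of the argument and the reason the paper restricts to $b\equiv 0$ rather than allowing a general nonconstant periodic background.
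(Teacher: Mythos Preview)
Your proposal has a genuine gap at its center. Step~2 is the entire content of the lemma, and you have stated a goal rather than a construction: ``choosing the $Np$-periodic defect pattern $\eta$ to force uniform exponential growth'' is not an argument, and no mechanism is given for why any particular pattern would achieve this. Worse, the target you state---that $L_{\widetilde a}(E)\ge c>0$ outside a set of measure $\le e^{-cNp}$---is circular: by \eqref{eq:perspecLE} that set is exactly $\sigma(J_{\widetilde a})$, so you are asserting the conclusion. Your Step~3 then compounds this by invoking Corollary~\ref{coro:bandLengthLinUB}, which only gives the linear bound $|B|\le C/(Np)$ and cannot by itself produce an exponential estimate.

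The paper's actual mechanism is quite different from localized defects. One first passes (via Lemmas~\ref{lem:odjmGenericGapsOpen} and~\ref{lem:ODJM0inspec}) to a $2N'p$-periodic $a''$ with short bands, all gaps open, and $0\notin\sigma(J_{a''})$. The key step (Claim~\ref{claim:GapsCoverAllSpectrum}) is to build a finite family $\mathcal F=\{a^{(k)}=\Lambda^{k/K}a''\}$ by \emph{global scalar multiplication}: since $\sigma(J_{\lambda a})=\lambda\sigma(J_a)$, this slides the bands on a multiplicative scale and one can arrange $\bigcap_k\sigma(J_{a^{(k)}})=\emptyset$. Then $\widetilde a$ is built by \emph{concatenating} long stretches of the $a^{(k)}$, so that for every $E$ in the interior of a band of $\sigma(J_{\widetilde a})$ some block transfer matrix has norm $\ge e^{cNp}$. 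The point is not that the full monodromy is hyperbolic (it is elliptic on the spectrum), but that the conjugacy matrices $M_E(\cdot)$ in \eqref{eq:Mconjdef} must then be exponentially large; this feeds into Theorem~\ref{t:per:ids:hilbschmidt} (not Corollary~\ref{coro:bandLengthLinUB}) to give $dk/dE\gtrsim e^{cNp}/(Np)$ and hence exponentially short bands. Your outline misses both the scalar-multiple/concatenation construction and the conjugacy-norm argument that links block growth to the IDS derivative.
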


\begin{proof} The argument proceeds as in \cite[Lemma~3.1]{DFL2017}, but with a multiplicative perturbation rather than an additive one. Thus, the most challenging aspect of the proof is the proof of Claim~\ref{claim:GapsCoverAllSpectrum}, which has to account for the difference between behavior near zero and far from zero, a difficulty not present in previous cases.
The details follow.

Let $\varepsilon>0$ and a $p$-periodic element $a \in \R_+^\Z$ be given, and denote $M := \|a\|_\infty$.
By Lemma~\ref{lem:ODJM0inspec}, there exists a $2p$-periodic sequence $a' \in \R_+^\Z$ with $\|a'\|_\infty \leq M$ and
\begin{equation} \label{eq:thin:aa'dist} \|a - a' \|_\infty < \varepsilon/3
\end{equation} such that $0 \notin \sigma(J_{a'})$. In particular, $\lambda_0(a') > 0$ (recall $\lambda_0$ is defined in \eqref{eq:lambda0Def}).

To continue, choose two parameters $\delta_1,\delta_2>0$ small enough that
\begin{equation}\label{eq:thin:delta12choice}
\lambda_0(a')-2\delta_2 > 0, \quad
\frac{\delta_1+4\delta_2}{\lambda_0(a')-2\delta_2} < \frac{\varepsilon}{3M}, \quad \text{ and } \quad \delta_2 < \frac{\varepsilon}{3}.\end{equation}
Choose an integer $N'$ large enough that the length of each band of $\sigma(J_{a'})$ is less than $\delta_1$ when $J_{a'}$ is viewed as a $2N'p$-periodic operator (which is possible due to Corollary~\ref{coro:bandLengthLinUB}). According to  Lemma~\ref{lem:odjmGenericGapsOpen} and Lemma~\ref{lem:ODJM0inspec}, there exists a $2N'p$-periodic sequence $a'' \in \R_+^\Z$ with $\|a''\|_\infty \leq M$ and
\begin{equation} \label{eq:thin:a'a''dist}
\|a'' - a'\|_\infty < \delta_2
\end{equation}
 such that $0 \notin \sigma(J_{a''})$ and all of the $(2N' p - 1)$ gaps of $\sigma(J_{a''})$ are open. Therefore, by standard perturbative arguments (namely \eqref{eq:JdistFromaDist}, \eqref{eq:specHdDist}, and \eqref{eq:hdmetric:def}), each band $B$ of $\sigma(J_{a''})$ satisfies
\begin{equation} \label{eq:thin:bandUB1}\Leb(B) \leq \delta_1 + 4 \delta_2\end{equation}
and we have
\begin{equation} \label{eq:thin:lambda0LB1} \lambda_0(a'') \geq \lambda_0(a') - 2 \delta_2 > 0.\end{equation}

\begin{claim}\label{claim:GapsCoverAllSpectrum}
There is a finite set $\mathcal{F}$ of $2N' p$-periodic elements of $\R_+^\Z$ such that
\begin{equation}
\label{eq:thin:a''ajdist}
\|\hat{a} - a''\|_\infty < \frac{\varepsilon}{3} \text{ for all } \hat{a} \in \mathcal{F}
\end{equation}
 and
\begin{equation} \label{eq:GapsCoverAllSpectrum}
\bigcap_{\hat{a} \in \mathcal{F}} \sigma(J_{\hat{a}}) = \emptyset.
\end{equation}
\end{claim}

\begin{claimproof}
The finite family of coefficient sequences that we produce in this proof will be obtained by multiplying $a''$ by scalars that are close to $1$.

By \eqref{eq:ODJMsymmetry} from Lemma~\ref{lem:ODJM0inspec}, it suffices to work on the part of spectrum contained in the positive half-line, and we may enumerate the bands\footnote{Strictly speaking, to be fully consistent with our earlier usage, we should enumerate these as $B_{k}$ with $N'p<k\leq 2N'p$, but the notation we used here is a little cleaner and should not cause confusion.} of $\sigma(J_{a''})$ contained in the right half-line from left-to-right as $B_1,B_2,\ldots,B_{N'p}$ and write $B_n = [E_n^-,E_n^+]$ for each $n$. Putting together \eqref{eq:thin:delta12choice}, \eqref{eq:thin:bandUB1}, and \eqref{eq:thin:lambda0LB1}, we see
\[\frac{|B_n|}{E_n^-} \leq \frac{\delta_1+4\delta_2}{\lambda_0(a')-2\delta_2} < \frac{\varepsilon}{3M}.\]
Since $|B_n| = E_n^+-E_n^-$, this naturally leads to
\begin{equation} \label{eq:thin:EnpmEst1} \frac{E_n^+}{E_n^-} < 1+\frac{\varepsilon}{3M}.\end{equation}
Define
\[ \Lambda = \max_{1 \le n \le N'p} \frac{E_n^+}{E_n^-}, \]
which, in view of \eqref{eq:thin:EnpmEst1}, satisfies $\Lambda < 1+ \frac{\varepsilon}{3M}$.
Choose $K \in \N$ large enough that
\begin{equation} \label{eq:thin:Cchoice}\Lambda^{1/K} <  \min_{1 \le n < N'p} \frac{E_{n+1}^-}{E_n^+}, \end{equation}
and put
\[a^{(k)} = \Lambda^{k/K} a'', \quad k \in [-K,K]\cap \Z.\]
We claim that $\mathcal{F} = \{a^{(k)} : k \in \Z, \ |k|\leq K\}$ is the desired finite set.
First, note $\Lambda>1$ so $1-\Lambda^{-1} < \Lambda-1$, which implies that for each $k \in \Z$ with $|k|\leq K$, one has
\[ \|a^{(k)} - a''\|_\infty = |\Lambda^{k/K}-1|\|a''\|_\infty \leq M (\Lambda-1) < \frac{\varepsilon}{3},\]
proving \eqref{eq:thin:a''ajdist}.

It remains to show \eqref{eq:GapsCoverAllSpectrum}, that is, every $E \in \R$ belongs to the resolvent set of some $J_{a^{(k)}}$. If $E \notin \sigma(J_{a''}) = \sigma(J_{a^{(0)}})$, then we are already done, so assume $E \in \sigma(J_{a^{(0)}}) = \sigma(J_{a''})$. As before, without loss, we may take $E>0$ and thus we consider $E \in B_n = [E_n^-,E_n^+]$. By our choice of $\Lambda$, $E = \Lambda^t E_n^-$ for some $0 \le t \le 1/2$ or $E = \Lambda^t E_n^+$ for some $-1/2\le t \le 0$. The cases are similar, so we focus on the case $E = \Lambda^t E_n^-$, $0 \le t \le 1/2$.

 Choosing the smallest integer $k$ for which $k/K$ is \emph{strictly} larger than $t$, we claim $\Lambda^{-k/K}E$ belongs to the resolvent set of $J_{a^{(k)}}$. If $n=1$, this is immediate from the definition of $k$. Otherwise, for $n\geq2$, we claim $\Lambda^{-k/K}E$ belongs to the gap $(E_{n-1}^+,E_n^-)$. For, if not, then
\[\Lambda^{-k/K}E \leq E_{n-1}^+ < E_n^- = \Lambda^{-t}E,\]
leading to
\[\Lambda^{1/K} \geq \Lambda^{\frac{k}{K}-t} \geq \frac{E_n^-}{E_{n-1}^+}, \]
contrary to \eqref{eq:thin:Cchoice}. Thus, $\Lambda^{-k/K}E$ belongs to the gap, which in turn implies that $E$ belongs to the resolvent set of $J_{a^{(k)}}$, as desired.
Putting $\mathcal{F} = \{a^{(k)} : |k|\leq K\}$ completes the proof of the claim.
\end{claimproof}

\bigskip

Take $\ell = 2K+1 = \#\mathcal{F}$ where $\mathcal{F}$ is the set constructed in Claim~\ref{claim:GapsCoverAllSpectrum}, and relabel the elements of $\mathcal{F}$ as $a^{(1)},\ldots,a^{(\ell)}$. Recall that $L(E,a)$ refers to the Lyapunov exponent at energy $E$ associated with the operator $J_a$. Since $L(E,a)$ is a continuous function of $E \in \R$ that is positive away from the spectrum of $J_a$ and goes to $\infty$ as $|E|\to\infty$, Claim~\ref{claim:GapsCoverAllSpectrum} gives
\begin{equation} \label{eq:thin:etaDef}
\eta := \min_{E \in \mathbb{R}} \max_{1 \leq j \leq \ell} L(E, a^{(j)}) > 0.
\end{equation}
We now have all the ingredients needed to define $\widetilde{a}$. Choose a large $N$ and take $\widetilde{N}$ to be the largest integer satisfying $ 2\ell(\widetilde{N} + 1) N'\leq  N$. We construct the $\widetilde{p} := Np$-periodic operator by concatenating the $a^{(j)}$'s, repeating each of them $\widetilde{N} + 1$ times, and filling in the remainder with copies of $a$.

 More precisely, denote $s_j = j(\widetilde{N} + 1)(2 N' p)$ and define a $\widetilde p$-periodic sequence $\widetilde{a}$ by
\[
\widetilde{a}_n =  \begin{cases}
  a^{(j)}_n &\mbox{if } s_{j-1} < n \leq s_j, \\
 a_n & \mbox{if }  s_\ell < n \leq \widetilde{p}.
\end{cases}
\]
Putting together \eqref{eq:thin:aa'dist}, \eqref{eq:thin:delta12choice}, \eqref{eq:thin:a'a''dist}, and \eqref{eq:thin:a''ajdist}, we see that
\[\|\widetilde{a}-a\|_\infty < \varepsilon.\]
Let $D$ denote the discriminant associated to $J_{\widetilde{a}}$ and let $E\in \R$ be given with $|D(E)|<2$. By \eqref{eq:thin:etaDef}, $L(E, a^{(j)}) \geq \eta$ for some $j$. We can use this to estimate transfer matrices across subintervals of $[s_{j-1}, s_j)$. To set notation, write $A^a_E(n,m)$ for the transfer matrix corresponding to the coefficient sequence $a$ (compare \eqref{e.transfermatrix1}--\eqref{e.transfermatrix2}) and define the following for $m \in \Z$:
\begin{align*}
X_{E}(m) & = A_E^{\widetilde{a}} (m + 2\widetilde{N}N'p, m)\\
\Phi_{E}(m) &= A_E^{\widetilde{a}} (m +\widetilde{p}, m).
\end{align*}
Thus, $X_{E}(m)$ transfers across a subinterval of length $2N'\widetilde{N}p$ beginning at $m$ and $\Phi_{E}(m)$ is the monodromy of $\widetilde{a}$ originating at $m$.
By $\widetilde{p}$-periodicity of $\widetilde{a}$, the reader will note that
\begin{align}
\nonumber
\Phi_E(m) & = X_E(m+\widetilde{p})^{-1}\Phi_E(m+2\widetilde{N}N'p)X_E(m) \\
\label{eq:thin:PhiXconj}
&= X_E(m)^{-1}\Phi_E(m+2\widetilde{N}N'p)X_E(m)\end{align}
for any $m \in \Z$.

Using the definition of $\widetilde{a}$ and $2N'p$-periodicity of $a^{(j)}$, we obtain
\begin{align*}
\|X_{E}(s_{j-1})\| &=
\|A_E^{\widetilde{a}} (s_{j-1}  + 2\widetilde{N}N'p , s_{j-1}) \| \\
& = \|A_E^{a^{(j)}} ( s_{j-1}  + 2\widetilde{N}N'p, s_{j-1}) \| \\[2mm]
& = \|A_E^{a^{(j)}} (  2\widetilde{N}N'p ,0) \| \\[2mm]
& = \left\|(A_E^{a^{(j)}} (2N'p, 0) )^{\widetilde{N}}\right\|.
\end{align*}
Naturally, the matrix appearing in the last line is (a power of) a monodromy matrix for the $2N'p$-periodic operator $J_{a^{(j)}}$, so invoking \eqref{eq:LEtospr}, and \eqref{eq:thin:etaDef} gives us
\begin{align*}
\|X_{E}(s_{j-1}) \|
& \geq \mathrm{spr}\left((A_E^{a^{(j)}} (2N'p,0) )^{\widetilde{N}}\right) \\[2mm]
& = e^{(2N'p L(E,a^{(j)})) \widetilde{N}}\\[2mm]
& \geq e^{2N'\widetilde{N}p\eta}.
\end{align*}
By maximality of $\widetilde{N}$, we have $2\ell(\widetilde{N}+2)N'>N$, leading to $\widetilde{N}N' > \frac{N}{2\ell} - 2N'$. Taking $N$ large enough, we can ensure that $2N'<N/4\ell$; putting this into the result of the previous calculation, we arrive at
\begin{equation}
\| X_{E}(s_{j-1})\|
\geq e^{\widetilde{p}\eta/(4\ell)}. \end{equation}

Since $|D(E)|<2$, the corresponding monodromy matrices are conjugate to a rotation $R_\theta$ (cf.\ \eqref{eq:Mconjdef}). In particular, there exists $M_E(s_{j - 1}) \in \mathrm{SL}(2, \mathbb{R})$ such that
\[
M_E(s_{j - 1}) \Phi_E(s_{j-1}) \left[M_E(s_{j - 1})\right]^{-1} = R_\theta.
\]
Applying \eqref{eq:thin:PhiXconj} with $m=s_{j-1}$, we note
\begin{align*}
X_E(s_{j-1})\Phi_E(s_{j-1})[X_E(s_{j-1})]^{-1}
&= \Phi_E(s_{j-1}+2\widetilde{N}N'p)
\end{align*}
Since $\Phi_E(s_{j-1}+2\widetilde{N}N'p)$ is conjugated to a rotation by $M_E(s_{j-1}+2\widetilde{N}N'p)$, it follows that $\Phi_E(s_{j-1})$ is conjugate to $R_\theta$ by
both $M_E(s_{j - 1})$ and $M_E(s_{j-1}+2\widetilde{N}N'p)X_E(s_{j-1})$. By uniqueness of conjugacies modulo $\SO(2, \mathbb{R})$ (cf.\ \eqref{eq:conj:uniqueUpToRot} and the surrounding discussion), there exists $Q \in \mathrm{SO}(2, \mathbb{R})$ such that
\[
M_E(s_{j-1}+2\widetilde{N}N'p)X_E(s_{j-1}) = QM_E(s_{j - 1}).
\]

Since $\|M\| = \|M^{-1}\|$ for $M \in \SL(2,\R)$ and $\|Q\| = 1$, we have
\begin{align*}
\max \{\|M_E (s_{j - 1})\|, \|M_E(s_{j} -2N'p)\|\} & \geq \|X_E(s_{j-1})\|^{1/2} \\
& \geq e^{\widetilde{p}\eta /(8\ell)}.
\end{align*}
According to Theorem~\ref{t:per:ids:hilbschmidt}, this gives
\[
\frac{dk}{dE} \geq \frac{C}{\widetilde{p}} e^{\widetilde{p}\eta /(4\ell)}
\]
for all $E$ with $|D(E)|<2$. The DOSM gives weight $1/\widetilde{p}$ to each band of $\sigma(J_{\widetilde{a}})$, so for any band $B \subseteq \sigma(J_{\widetilde{a}})$, we have
\[
|B| \lesssim  e^{-\widetilde{p}\eta/(4\ell)}.
\]
Therefore, for sufficiently large $N$ (hence sufficiently large $\widetilde{p}$), there exists a constant $c$ such that
\[
\Leb(\sigma(J_{\widetilde{a}})) \lesssim \widetilde{p} e^{-\widetilde{p}\eta/(4\ell)} \leq e^{-c \widetilde{p}},
\]
as desired.
\end{proof}

Lemma~\ref{lem:ODJMsmallspec} also holds for $J_{a, b}$ with fixed periodic off-diagonals $a$ and changing diagonals $b$. The statement follows:

\begin{lemma} \label{lem:DJMsmallspec}
Suppose $a,b$ are both $p$-periodic sequences. For all $\varepsilon>0$, there exist constants $c,C>0$ depending only on $\varepsilon$, $p$, $\|a\|$ and $\|b\|$ such that the following holds true. For any $N \in \N$ with $N \geq C$, there exists $\widetilde{b}$ of period $Np$ such that $\|b-\widetilde{b}\|_\infty < \varepsilon$ and $$\Leb(\sigma(J_{a, \widetilde{b}})) < e^{-cNp}.$$\end{lemma}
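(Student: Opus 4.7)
The plan is to mirror the proof of Lemma~\ref{lem:ODJMsmallspec}, with the multiplicative rescaling $a \mapsto \Lambda^{k/K} a$ replaced by an additive shift $b \mapsto b + t_k \bm{1}$. Because shifting $b$ by a constant translates the spectrum rigidly, no energy plays a distinguished role, and there is no analog of Lemma~\ref{lem:ODJM0inspec} to deal with. Given $p$-periodic $a, b$ and $\varepsilon > 0$, I would first choose $\delta_1, \delta_2 > 0$ small, and an integer $N'$ large enough that, when viewed as $N'p$-periodic, every band of $\sigma(J_{a,b})$ has length less than $\delta_1$ (via Corollary~\ref{coro:bandLengthLinUB}); a standard genericity argument in the spirit of Lemma~\ref{lem:odjmGenericGapsOpen}, now perturbing one diagonal entry, then produces an $N'p$-periodic $b''$ with $\|b - b''\|_\infty < \delta_2$ and all $N'p - 1$ gaps of $\sigma(J_{a, b''})$ open. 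By \eqref{eq:specHdDist}, the bands of $\sigma(J_{a,b''})$ have length at most $\delta_1 + 2\delta_2$ and the minimum gap width $g > 0$ is strictly positive.

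The key step is the finite family construction. Take the smallest integer $\ell \geq 2$ with $(\ell - 1)g/2 > \delta_1 + 2\delta_2$, set $t_k = kg/2$ for $k = 0, 1, \ldots, \ell - 1$, and define $b^{(k)} := b'' + t_k \bm{1}$. Then $\sigma(J_{a, b^{(k)}}) = \sigma(J_{a, b''}) + t_k$, and I claim $\bigcap_{k=0}^{\ell-1} \sigma(J_{a, b^{(k)}}) = \emptyset$: given any $E$, the $\ell$ points $E - t_k$ are spaced by $g/2 < g$ and span a total range $(\ell-1)g/2 > \delta_1 + 2\delta_2$, so they cannot all lie in a single band of $\sigma(J_{a,b''})$ (the band would be too long), and, by the spacing constraint, they cannot skip a gap; hence some $E - t_k$ lies in a gap. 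A crude pigeonhole on the total length of the spectrum gives $g \leq 2\|J\|/(N'p-1)$, so for $N'$ large the perturbation $\|b^{(k)} - b\|_\infty \leq \delta_2 + (\ell-1)g/2$ is comfortably below $\varepsilon$.

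The remainder of the proof is essentially identical to the corresponding portion of Lemma~\ref{lem:ODJMsmallspec}. One defines $\eta := \min_E \max_{1 \leq k \leq \ell} L(E, a, b^{(k)}) > 0$ using continuity of $L$ together with its positivity off the spectrum and growth at infinity; for large $N$, one takes $\widetilde{N}$ maximal with $\ell(\widetilde{N} + 1)N' \leq N$; and one concatenates $\widetilde{N} + 1$ copies of each block $b^{(k)}$, filling the remaining sites with $b$, to obtain a $\widetilde{p} = Np$-periodic sequence $\widetilde{b}$ satisfying $\|\widetilde{b} - b\|_\infty < \varepsilon$. The same transfer-matrix/monodromy-conjugation argument used in Lemma~\ref{lem:ODJMsmallspec}, combined with Theorem~\ref{t:per:ids:hilbschmidt} and the fact that the DOSM assigns weight $1/\widetilde{p}$ to each band, then yields $\Leb(\sigma(J_{a, \widetilde{b}})) \leq e^{-c Np}$. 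I do not anticipate any genuine obstacle: the additive shift construction is more elementary than the multiplicative rescaling it replaces, and the rest of the machinery (Lyapunov exponent estimate, IDS derivative bound, DOSM weight) carries over verbatim once $a$ is held fixed throughout.
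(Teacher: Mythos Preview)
Your proposal is correct and follows essentially the same approach as the paper, which simply remarks that the proof of Lemma~\ref{lem:DJMsmallspec} is identical to that of Lemma~\ref{lem:ODJMsmallspec} except that the finite family in Claim~\ref{claim:GapsCoverAllSpectrum} is produced via additive shifts $b \mapsto b + c$ (as in Avila~\cite{A09} and Damanik--Fillman--Lukic~\cite{DFL2017}) rather than multiplicative rescaling. Your explicit choice of shifts $t_k = kg/2$ together with the band/gap pigeonhole argument is a clean way to realize this step, and the remainder of your outline (Lyapunov exponent lower bound, concatenation, IDS derivative estimate) indeed carries over verbatim.
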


The proof of Lemma \ref{lem:DJMsmallspec} is almost the same as Lemma \ref{lem:ODJMsmallspec} except for Claim~\ref{claim:GapsCoverAllSpectrum}. One constructs $\{b^{(1)},\ldots, b^{(\ell)}\}$ by shifting $b \mapsto b + c$ and hence the proof is similar to Avila \cite{A09} and D.--F.--Lukic \cite{DFL2017}.

\begin{remark}
The present paper is focused on the spectrum and spectral type, but the methods we describe have other applications. For instance, by following the arguments of Kr\"uger--Gan \cite{KrugerGan2011MathNach}, one can use Lemma~\ref{lem:ODJMsmallspec} to show that for a dense set of limit-periodic positive sequences $a \in \R_+^\Z$, the integrated density of states of the off-diagonal Jacobi matrix $J_a$ has no positive H\"{o}lder modulus of continuity.
\end{remark}

\section{Singular Continuous Cantor Spectrum} \label{sec:cantor}

We are now ready to prove our main results. We begin with the proofs of Theorems~\ref{t:genericZMspecFixedDiag} and \ref{t:denseZHDimspecFixedDiag}.

\begin{proof}[Proof of Theorem~\ref{t:genericZMspecFixedDiag}]
For $M > 1, \delta>0$, let $U_{M,\delta}$ denote the set of all $a \in \LP_M^+$ for which $\Leb(\sigma(J_a)) < \delta$.

\begin{claim}
For all $M$ and $\delta$, $U_{M,\delta}$ is a dense and open subset of $\LP_M^+$.
\end{claim}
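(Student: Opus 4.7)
Suppose $a \in U_{M,\delta}$. Combining \eqref{eq:JdistFromaDist} and \eqref{eq:specHdDist}, any $a' \in \LP_M^+$ satisfies $\sigma(J_{a'}) \subseteq B_{2\|a - a'\|_\infty}(\sigma(J_a))$. Since $\sigma(J_a)$ is compact and $\Leb(B_\eta(\sigma(J_a))) \to \Leb(\sigma(J_a)) < \delta$ as $\eta \to 0^+$ by continuity from above of Lebesgue measure, there exists $\eta > 0$ with $\Leb(B_\eta(\sigma(J_a))) < \delta$. Every $a' \in \LP_M^+$ with $\|a - a'\|_\infty < \eta/2$ then lies in $U_{M,\delta}$, so $U_{M,\delta}$ is open.

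\textbf{Density (reduction).} I first reduce to approximating periodic sequences. Given $a \in \LP_M^+$, limit-periodicity supplies periodic $\ell^\infty$-approximants, and pointwise clipping to $[M^{-1}, M]$ (a $1$-Lipschitz operation that fixes $a$) yields periodic elements of $\LP_M^+$ converging to $a$. Hence it suffices, for every periodic $a \in \LP_M^+$ of period $p$ and every $\varepsilon > 0$, to produce a periodic $\widetilde{a} \in \LP_M^+$ with $\|\widetilde{a} - a\|_\infty < \varepsilon$ and $\Leb(\sigma(J_{\widetilde{a}})) < \delta$.

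\textbf{Density (main construction).} Set $m_0 = (M^{-1} + M)/2$ and consider the $p$-periodic sequence $a_t = (1-t)a + t m_0$ for $t \in (0,1)$. Then $a_t$ takes values in $[M^{-1} + \mu(t),\, M - \mu(t)]$ where $\mu(t) := t(M - M^{-1})/2 > 0$, and $\|a - a_t\|_\infty \leq \mu(t)$. Choose $t$ so that $\mu(t) < \varepsilon/2$ and apply Lemma~\ref{lem:ODJMsmallspec} to $a_t$ with perturbation parameter $\mu(t)$; this yields a periodic $\widetilde{a}$ of period $Np$ with $\|\widetilde{a} - a_t\|_\infty < \mu(t)$ and $\Leb(\sigma(J_{\widetilde{a}})) < e^{-cNp}$. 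For $N$ sufficiently large this last quantity lies below $\delta$. Because $\|\widetilde{a} - a_t\|_\infty < \mu(t)$ and $a_t(n) \in [M^{-1} + \mu(t), M - \mu(t)]$, the values of $\widetilde{a}$ remain in $[M^{-1}, M]$, so $\widetilde{a} \in \LP_M^+$; the triangle inequality then gives $\|\widetilde{a} - a\|_\infty \leq 2\mu(t) < \varepsilon$.

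\textbf{Main obstacle.} The one delicate point is staying inside $\LP_M^+$: Lemma~\ref{lem:ODJMsmallspec} controls only the $\ell^\infty$-distance and does not respect the interval $[M^{-1}, M]$ cut out by $\langle a \rangle \leq M$. Averaging $a$ with the constant sequence $m_0$ pushes it slightly into the interior, creating the buffer $\mu(t)$ that absorbs the additive perturbation produced by the lemma while keeping the outputs inside the required range.
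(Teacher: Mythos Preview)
Your proof is correct and follows essentially the same route as the paper: openness via the Hausdorff-distance control of the spectrum under perturbation, density via reduction to periodic sequences and then an application of Lemma~\ref{lem:ODJMsmallspec}. The only real difference is that you treat the boundary issue (ensuring the perturbed sequence stays in $\LP_M^+$) explicitly via the averaging-with-$m_0$ trick, whereas the paper handles this more tersely by restricting attention to periodic $a$ with $\|a\|_\infty < M$ (implicitly also $\|a^{-1}\|_\infty < M$) and taking the perturbation in Lemma~\ref{lem:ODJMsmallspec} small enough; your version is a bit more careful on this point but not substantively different.
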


\begin{claimproof}
We first show $U_{M,\delta}$ is dense. Since the periodic elements are dense in $\LP_M^+$, it suffices to show that, for any periodic $a$ such that $\|a\|_\infty < M$, every neighborhood of $a$ intersects $U_{M,\delta}$. So, let $a \in \LP_M^+$ be periodic of period $p$ with $\|a\|_\infty < M$. According to Lemma~\ref{lem:ODJMsmallspec}, for any $\varepsilon > 0$, there exists $\widetilde{a}$ with period $Np$ and $\|a - \widetilde{a}\|_\infty <\varepsilon$ satisfying
\[
\Leb (\sigma(J_{\widetilde{a}})) < \delta,
\]
proving that $U_{M,\delta}$ is dense.

Let us show $U_{M,\delta}$ is open. Given $a \in U_{M,\delta}$, cover $\sigma(J_a)$ by finitely many intervals $I_1,\ldots,I_k$ such that $\sum |I_j| < \delta$. Then, if $\delta'$ is small enough, any $a'$ with $\|a-a'\|_\infty < \delta'$ will satisfy $\Leb(\sigma(J_{a'}))< \delta$, and hence $U_{M,\delta}$ is open.
\end{claimproof}

Thus, with
\[U_M := \bigcap_{n=1}^\infty U_{M,\frac1n},\]
we have $\Leb(\sigma(J_a)) = 0$ for every $a \in U_M$, and $U_M$ is a dense $G_\delta$.

The proof that the spectrum is generically continuous relies on suitable versions of the Gordon lemma \cite{Gordon1976UMN} for Jacobi matrices, which have been worked out in \cite{HanJit2017Adv, HanYanZha2020Jdam, Seifert2019}. To that end, define $\mathcal{O}_{M,k, p}$ to be
\[
\mathcal{O}_{M,k, p} := \{a \in \LP_M^+ : \max_{1 \leq n \leq p} |a_n - a_{n \pm p}| < k^{-p} \}.
\]
$\mathcal{O}_{M,k,p}$ is an open set, so
\[
\mathcal{O}_{M,N} := \bigcup_{k \geq N} \bigcup_{p \geq N} \mathcal{O}_{M,k, p}
\]
is also open. Since all periodic elements of $\LP_M^+$ are contained in $\mathcal{O}_{M,N}$, $\mathcal{O}_{M,N}$ is dense as well.
Define $\mathcal{G}_M$ to be
\[\mathcal{G}_M = \bigcap_{N \geq 1} \mathcal{O}_{M,N}.\]
According to the Gordon lemma (e.g.\ \cite[Theorem~2.1]{Seifert2019}), for any $a \in \mathcal{G}_M$, the spectrum of $J_a$ is purely continuous.
Denote $\mathcal{C}$ to be
\[\mathcal{C}_M := \mathcal{G}_M \cap U_M.\]
Since $\mathcal{G}_M$ and $U_M$ are dense $G_\delta$ sets, $\mathcal{C}$ is generic in $\LP_M^+$. By construction, for any $a \in \mathcal{C}$, the spectrum $J_a$ is a set of zero Lebesgue measure and the spectral type is singular continuous. By general principles \cite{Pastur1980CMP}, the spectrum of $J_a$ has no isolated points and hence is a Cantor set.
\end{proof}

\begin{proof}[Proof of Theorem~\ref{t:denseZHDimspecFixedDiag}]
Let $M > 1$ be given and fix $a \in \LP^+_M$ and $\varepsilon > 0$. First, there exists a periodic $a^{(0)}$ with \begin{equation} \label{eq:odjmHD0:a0Est}\|a - a^{(0)}\| \leq \varepsilon/2.
\end{equation}
Let $p_0$ denote the period of $a^{(0)}$. To begin the construction, set $\varepsilon_1 = \varepsilon/4$, and use Lemma~\ref{lem:ODJMsmallspec} to choose $a^{(1)}$ of period $p_1 = N_1 p_0$ such that
$ \|a^{(0)}-a^{(1)}\|_\infty \leq \varepsilon_1$
and
$\mu_1 := \Leb(\sigma(J_{a^{(1)}})) \leq e^{-p_1^{1/2}}$.
Assume $\varepsilon_{n-1}$ and $a^{(n-1)}$ have been chosen so that
\[\|a^{(n-2)} - a^{(n-1)}\|_\infty \leq \varepsilon_{n-1},\]
$a^{(n-1)}$ has period $p_{n-1}$ and
\[\mu_{n-1} := \Leb(\sigma(J_{a^{(n-1)}})) \leq e^{-p_{n-1}^{1/2}}.\]
Define
\begin{equation} \label{eq:odjmHD0:epsNDef} \varepsilon_n = \min\left( \frac{\varepsilon_{n-1}}{2}, \frac{\mu_{n-1}}{8}, \frac{1}{2}n^{p_{n-1}} \right).\end{equation}
Applying Lemma~\ref{lem:ODJMsmallspec} again, we see that for a suitable large $N_n$, we may produce $a^{(n)}$ of period $p_n = N_n p_{n-1}$ such that
\begin{equation} \label{eq:odjmHD0:anEst} \|a^{(n)} - a^{(n-1)} \| \leq \varepsilon_n \end{equation}
and
\begin{equation}\label{e.sigmaJanmeas}
\Leb (\sigma(J_{a^{(n)}})) \leq e^{-p_n^{1/2}}.
\end{equation}
Applying \eqref{eq:odjmHD0:epsNDef} inductively (recall $\varepsilon_1 = \varepsilon/4$), we see that $\varepsilon_n \leq \varepsilon/2^{n+1}$ for all $n$. Thus, combining \eqref{eq:odjmHD0:a0Est} and \eqref{eq:odjmHD0:anEst}, we see that the limit
\[a^{(\infty)} = \lim_{n \to \infty} a^{(n)}\]
exists and satisfies
\[\|a^{(\infty)} - a^{(n)}\| \leq 2 \varepsilon_{n+1} \leq \varepsilon_n.\]
In particular, we note
\[\|a^{(\infty)} - a\| \leq \varepsilon\]
and (invoking \eqref{eq:odjmHD0:epsNDef} again)
\[\|a^{(\infty)} - a^{(n)}\| \leq \frac{\mu_n}{4}.\]
In view of \eqref{eq:JdistFromaDist} and \eqref{eq:specHdDist}, we see that $\sigma(J_a^{(\infty)})$ may be covered by the closed $\mu_n/2$-neighborhood of $\sigma(J_a^{(n)})$, which consists of $p_n$ intervals of length at most $2\mu_n \leq 2\exp(-p_n^{1/2})$
which suffices to show that the spectrum of $J_{a^{(\infty)}}$ has zero lower box-counting dimension (and hence zero Hausdorff dimension and zero Lebesgue measure as discussed in Remark~\ref{rem:zeroboxtozerohd}).

From \eqref{eq:odjmHD0:epsNDef}, we also see that
\[ \|a^{(n)} - a^{(\infty)}\|_\infty \leq n^{-p_n},\]
which means $J=J_{a^{(\infty)}}$ is a Jacobi matrix of Gordon type and hence has purely continuous spectrum as before. As already mentioned, the spectrum of $J$ has zero Lebesgue measure, so it can support no absolutely continuous measures and hence $J$ has purely singular continuous spectrum.
\end{proof}

\begin{proof}[Proof of Theorems~\ref{t:genericZMspecFixedOffD} and \ref{t:denseZHDimspecFixedOffD}]
These theorems follow from Lemma~\ref{lem:DJMsmallspec} in precisely the same fashion that Theorems~\ref{t:genericZMspecFixedDiag} and \ref{t:denseZHDimspecFixedDiag} followed from Lemma~\ref{lem:ODJMsmallspec}. 
\end{proof}

\begin{proof}[Proof of Theorems~\ref{t:genericZMspec} and \ref{t:denseZHDimspec}]
Theorems~\ref{t:genericZMspec} and \ref{t:denseZHDimspec} follow immediately from Theorems~\ref{t:genericZMspecFixedOffD} and \ref{t:denseZHDimspecFixedOffD}.
\end{proof}

We conclude by using the previous construction to prove Theorem~\ref{t:weightedL}, concerning weighted Laplacians in $\ell^2(\Z^d)$ with suitable limit-periodic hopping terms.

\begin{proof}[Proof of Theorem~\ref{t:weightedL}]
Apply Theorem~\ref{t:denseZHDimspecFixedDiag} to produce a limit-periodic sequence $\{a_n\}_{n\in \Z}$ such that the Jacobi matrix $J_{a,0}$ on $\ell^2(\Z)$ has purely singular continuous spectrum and satisfies
\[ \dim_{\rm B}^-(J_{a,0}) = 0, \]
and define $w_{\bm n, \bm m}$ by
\[
w_{\bm n, \bm n+ \bm e_j} = a_{n_j}, \quad
w_{\bm n, \bm n- \bm e_j} = a_{n_j-1}.
\]
Viewing $\ell^2(\Z^d)$ as  $\ell^2(\Z)^{\otimes d}$, the $d$-fold tensor product of $\ell^2(\Z)$ with itself, one has
\[
L_w = J_{a,0} \otimes I^{\otimes d-1} + I \otimes J_{a,0} \otimes I^{\otimes d-2} + \cdots + I^{\otimes d-1}\otimes J_{a,0},
\]
so one has
\[ \sigma(L_w)= \underbrace{\sigma(J_{a,0}) + \sigma(J_{a,0}) +\cdots + \sigma(J_{a,0})}_{d \text{ copies}}, \]
and the result follows immediately.
\end{proof}

\begin{appendix}

\section{The IDS for Periodic Jacobi Matrices} \label{sec:perIDS}

Our objective here is to derive suitable bounds on the density of states of periodic Jacobi operators from the general theory.
 If $J$ is a Jacobi matrix of period $p$, the density of states measure (DOSM) of $J$, denoted $dk$ can be defined by
\[\int \! f \, dk = \frac{1}{p}\sum_{n=0}^{p-1} \langle\delta_n, f(J)\delta_n\rangle, \quad z\in \C\setminus \R.\]
Equivalently, the DOSM is precisely the weak limit of normalized eigenvalue counting measures associated with restrictions of $J$ to finite boxes. That is, $dk = \wlim dk_N$ where \[\int \! f \, dk_N = \frac{1}{N} \sum_{j=1}^N f(E_j)\]
and $E_1<E_2 < \cdots < E_N$ are the eigenvalues of $J_N$, the restriction of $J$ to $[1,N]$ with Dirichlet boundary conditions (the reader can check that the eigenvalues of $J_N$ are simple). The \emph{integrated density of states} (IDS) is the accumulation function of the DOSM, that is,
\[ k(E) = \int \! \chi_{(-\infty,E]}\, dk. \]
 For a helpful introduction with proofs of basic facts, see \cite[Chapter~5]{simszego}.

Recall that any $A \in \SL(2,\R)$ induces a linear fractional transformation (LFT) on the sphere $\overline{\C} = \C \cup\{\infty\}$ via
$$
\begin{bmatrix}
a & b \\
c & d
\end{bmatrix} \cdot z
= \frac{az + b}{cz+d}.\
$$
We let $\C_+$ denote upper half-plane $\C_+ = \{ z \in \C : \Im z > 0\}$
and define
$$
R_\theta
=
\begin{bmatrix} \cos(\theta) & - \sin(\theta) \\ \sin(\theta) & \cos(\theta) \end{bmatrix}
$$
for $\theta \in \R$.

Let $J = J_{a,b}$ be a $p$-periodic Jacobi matrix. Recall that with the transfer matrices $A_z(n,m)$ defined in \eqref{e.transfermatrix1}--\eqref{e.transfermatrix2}, we can describe the spectrum of $J$ via \eqref{e.periodicspectrum}, that is,
\[
\sigma(J) = \{E \in \R : \tr \Phi(E) \in [-2,2]\},
\]
where the monodromy matrix $\Phi(z) = \Phi(z,p)$ is given by the transfer matrix over one period, $A_z(p,0)$.

Now, for $E$ with $D(E) := \tr \Phi(E) \in (-2,2)$, we have
$$
\tr  A_E(p+j,j) = D(E)\in (-2,2) \quad \text{ for each } j
$$
by periodicity of $J$ and cyclicity of the trace, so each of the monodromies $A_E(p+j,j)$ is conjugate to $R_\theta$, where $\theta$ satisfies
\begin{equation} \label{eq:floquetthetadef} 2 \cos(\theta) = D(E).\end{equation}
We shall denote the conjugacies by $M_E(j) = M_E(j;p)$, that is,
\begin{equation} \label{eq:Mconjdef}
M_E(j;p) A_E(p+j,j) M_E(j;p)^{-1} \in \SO(2,\R)
\end{equation}
for $j \in \Z$, $D(E) \in (-2,2)$.
Let us note that rotation matrices are uniquely characterized as the subgroup of $\SL(2,\R)$ that stabilizes $i \in \C_+$ under the action given by LFTs, and in general, $A \in \SL(2,\R)$ has trace in $(-2,2)$ if and only if the LFT action of $A$ on $\C_+$ has a unique fixed point. Thus, a conjugacy from an elliptic element $A \in \SL(2,\R)$ to a rotation is specified by choosing $M \in \SL(2,\R)$ whose associated LFT maps the unique fixed point of $A$ in $\C_+$ to $i$ and these conjugacies are unique modulo the stabilizer of $i$, that is, if $M_1$ and $M_2$ both satisfy \eqref{eq:Mconjdef}, one has
\begin{equation}\label{eq:conj:uniqueUpToRot}
M_1 = QM_2\end{equation} for a rotation $Q$.

From the general theory (e.g.\ \cite[Sections~5.3 and 5.4]{simszego}, see especially (5.3.34)), we know that the derivative of the integrated density of states can be related to $ |d\theta / dE| $ with $\theta$ from \eqref{eq:floquetthetadef}, so our first goal is to recover this derivative from the transfer matrices.
Let us introduce some notation.
Suppose $J$ is $p$-periodic and denote
$$
T_j = \frac{1}{a_j} \begin{bmatrix} E - b_j & - 1 \\ a_j^2 & 0 \end{bmatrix},
\quad
A_j = T_{j} \cdots T_1,
\quad
\Phi_j = A_{j-1} A_p A_{j-1}^{-1},
\quad
\text{for } j \geq 1,
$$
where we adopt the convention $A_0 = I$ in the $j = 1$ case of the final definition and suppress the dependence of all quantities on $E$ for notational simplicity. Finally, for $D(E) \in (-2,2)$, we denote $M_j = M_E(j;p)$.

\begin{theorem} \label{t:per:ids:hilbschmidt}
Let $J$ be  $p$-periodic with corresponding integrated density of states $k$. We have
\begin{equation} \label{eq:dkde:hilbschm}
\frac{dk}{dE}
 \geq \frac{C}{p} \sum_{j=1}^p \| M_j \|_2^2
\end{equation}
on $\sigma(J)$, where $\| M \|_2 = \sqrt{\tr(M^* M)}$ denotes the Hilbert--Schmidt norm of $M$ and $C$ is a constant that only depends on $\|a\|_\infty$. If $a$ is a constant sequence, then we have equality in \eqref{eq:dkde:hilbschm} with $C = \frac{1}{2(1+a_0^2)\pi}$.
\end{theorem}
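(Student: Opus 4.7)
The plan is to relate $dk/dE$ to the derivative of the Floquet angle $\theta(E)$ defined by \eqref{eq:floquetthetadef}, then extract two dual expressions for $\dot\theta$ by differentiating the conjugacy $M_k\Phi_k M_k^{-1} = R_\theta$ in two ways, and exploit these to compare the two columns of each $M_k$ against each other.

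From the standard periodic Jacobi theory (\cite[Sections~5.3--5.4]{simszego}, especially (5.3.34)), one has $dk/dE = |\dot\theta|/(p\pi)$ on the interior of each band, so it suffices to lower-bound $|\dot\theta|$. Differentiating $M\Phi M^{-1} = R_\theta$ in $E$ and using $\dot R_\theta = \dot\theta\, J_0 R_\theta$ with $J_0 = \bigl(\begin{smallmatrix}0 & -1 \\ 1 & 0\end{smallmatrix}\bigr)$, one pairs against $J_0 R_\theta^{-1}$ inside the trace; the commutator $[\dot M M^{-1},R_\theta]$ drops out because $J_0$ commutes with every rotation, yielding the master formula
\begin{equation*}
\dot\theta \;=\; -\tfrac{1}{2}\tr\bigl(M^{-1}J_0 M \cdot \Phi^{-1}\dot\Phi\bigr) \;=\; -\tfrac{1}{2}\tr\bigl(M^{-1}J_0 M \cdot \dot\Phi\,\Phi^{-1}\bigr),
\end{equation*}
valid for any conjugacy $M$ of $\Phi = \Phi_1$ to $R_\theta$.

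A direct computation with the explicit form of $T_n$ gives $T_n^{-1}\dot T_n = -e_{21}$ and $\dot T_n T_n^{-1} = a_n^{-2}\,e_{12}$, where $e_{ij}$ denotes the standard matrix unit. Telescoping inside the products then yields
\begin{equation*}
\Phi^{-1}\dot\Phi \;=\; -\sum_{j=1}^p A_{j-1}^{-1}\,e_{21}\,A_{j-1}, \qquad \dot\Phi\,\Phi^{-1} \;=\; \sum_{j=1}^p a_j^{-2}(A_p A_j^{-1})\,e_{12}\,(A_p A_j^{-1})^{-1}.
\end{equation*}
By uniqueness of the conjugacy modulo $\SO(2)$ (cf.\ \eqref{eq:conj:uniqueUpToRot}), we have $M_k A_{k-1} = Q_k M_1$ for some $Q_k \in \SO(2)$; since $J_0$ commutes with each $Q_k$, these rotations disappear from traces of the form $\tr(M_1^{-1}J_0 M_1 \cdot \ldots)$ upon cyclic rearrangement, and both telescoped sums reindex cleanly in terms of the base-point-$k$ conjugacies $M_k$. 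Writing $M_k = \bigl(\begin{smallmatrix}\alpha_k & \beta_k \\ \gamma_k & \delta_k\end{smallmatrix}\bigr)$, a one-line computation shows $(M_k^{-1}J_0 M_k)_{12} = -\|M_k e_2\|^2$ and $(M_k^{-1}J_0 M_k)_{21} = \|M_k e_1\|^2$, leading to the pair of dual identities
\begin{equation*}
|\dot\theta| \;=\; \tfrac{1}{2}\sum_{k=1}^p\|M_k e_2\|^2 \;=\; \tfrac{1}{2}\sum_{k=1}^p a_{k-1}^{-2}\,\|M_k e_1\|^2.
\end{equation*}

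Equating these forces $\sum_k\|M_k e_1\|^2 = \sum_k a_{k-1}^2\|M_k e_2\|^2 \le \|a\|_\infty^2\sum_k\|M_k e_2\|^2$, and hence
\begin{equation*}
\sum_k\|M_k\|_2^2 \;=\; \sum_k\bigl(\|M_k e_1\|^2 + \|M_k e_2\|^2\bigr) \;\le\; (1+\|a\|_\infty^2)\sum_k\|M_k e_2\|^2 \;=\; 2(1+\|a\|_\infty^2)\,|\dot\theta|,
\end{equation*}
so that $dk/dE = |\dot\theta|/(p\pi) \ge \sum_k\|M_k\|_2^2/(2p\pi(1+\|a\|_\infty^2))$, which is \eqref{eq:dkde:hilbschm} with $C = 1/(2\pi(1+\|a\|_\infty^2))$. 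When $a \equiv a_0$ is constant both inequalities saturate, recovering $C = 1/(2(1+a_0^2)\pi)$ as stated. The main technical hurdle is verifying that the two orderings of the master formula genuinely produce distinct identities (one in $\|M_k e_1\|^2$, the other in $\|M_k e_2\|^2$, with the weights $a_{k-1}^{-2}$) rather than the same identity in disguise; this requires careful bookkeeping with the rotations $Q_k$ under the cyclic permutation inside the trace, but is ultimately forced by the fact that $T_n^{-1}\dot T_n$ and $\dot T_n T_n^{-1}$ lie in transverse one-dimensional subspaces of $\mathfrak{sl}(2,\R)$, namely $\R e_{21}$ and $\R e_{12}$.
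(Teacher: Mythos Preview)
Your argument is correct and lands on the same constant as the paper, but the route to the second identity is packaged differently. The paper computes only $\Phi^{-1}\dot\Phi$, writes everything in terms of the explicit fixed points $z_j$ (so that $\|M_j e_2\|^2 = |z_j|^2/\Im z_j$ and $\|M_j e_1\|^2 = 1/\Im z_j$), and then invokes the M\"obius recursion $\Im z_{j+1} = \Im z_j/(a_j^2|z_j|^2)$ to convert $|z_j|^2/\Im z_j$ into $1/(a_j^2\Im z_{j+1})$; this is the termwise statement $\|M_j e_2\|^2 = a_j^{-2}\|M_{j+1} e_1\|^2$. You instead recover the second identity globally by telescoping $\dot\Phi\,\Phi^{-1}$ with $\dot T_n T_n^{-1} = a_n^{-2}e_{12}$, which is a clean coordinate-free substitute for the fixed-point calculation and has the virtue of never writing down $z_j$ explicitly. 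The two devices are equivalent, and both feed into the same convex-combination trick at the end.

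One small slip: from your two sum identities $\sum_k\|M_k e_2\|^2 = \sum_k a_{k-1}^{-2}\|M_k e_1\|^2$ you cannot \emph{directly} conclude the termwise-looking equality $\sum_k\|M_k e_1\|^2 = \sum_k a_{k-1}^2\|M_k e_2\|^2$ (that would need the termwise relation, which you have not derived and which in fact carries a shifted index). However, the inequality you actually need, $\sum_k\|M_k e_1\|^2 \le \|a\|_\infty^2\sum_k\|M_k e_2\|^2$, follows immediately from $\sum_k a_{k-1}^{-2}\|M_k e_1\|^2 = \sum_k\|M_k e_2\|^2$ together with $a_{k-1}^{-2} \ge \|a\|_\infty^{-2}$, so the conclusion is unaffected.
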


Let us note that in general this is only an inequality, not an identity.
Let us note the following immediate corollary.

\begin{coro} \label{coro:bandLengthLinUB}
Let $J$ be $p$-periodic. Each band $B$ of $\sigma(J)$ satisfies
\[|B| \leq \frac{C}{p}\]
where $C$ is a constant that depends only on $\|a\|_\infty$.
\end{coro}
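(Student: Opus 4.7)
The plan is to combine Theorem~\ref{t:per:ids:hilbschmidt} with the well-known fact that the density of states measure $dk$ of a $p$-periodic Jacobi matrix assigns weight exactly $1/p$ to each of the $p$ bands. So the corollary reduces to establishing a uniform pointwise lower bound for $dk/dE$ on the interior of any band, valid on the whole spectrum with a constant that depends only on $\|a\|_\infty$.

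For that, the key elementary observation I would invoke is that every $M \in \SL(2,\R)$ satisfies $\|M\|_2^2 \geq 2$: indeed, if $s_1 \geq s_2 > 0$ are its singular values, then $s_1 s_2 = 1$, hence
\begin{equation*}
\|M\|_2^2 = s_1^2 + s_2^2 \geq 2 s_1 s_2 = 2
\end{equation*}
by AM--GM. Applied to each conjugacy matrix $M_j$ appearing in \eqref{eq:dkde:hilbschm}, this yields $\sum_{j=1}^p \|M_j\|_2^2 \geq 2p$, and therefore, by Theorem~\ref{t:per:ids:hilbschmidt},
\begin{equation*}
\frac{dk}{dE} \geq \frac{C}{p} \cdot 2p = 2C
\end{equation*}
at every $E$ in the interior of the spectrum, where $C$ depends only on $\|a\|_\infty$.

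Finally, fix a band $B$ of $\sigma(J)$. Since the DOSM assigns weight $1/p$ to $B$, integrating the pointwise lower bound over $B$ gives
\begin{equation*}
\frac{1}{p} = \int_B dk = \int_B \frac{dk}{dE}\, dE \geq 2C\, |B|,
\end{equation*}
so $|B| \leq 1/(2Cp)$, which is the desired estimate with the constant renamed. There is no substantial obstacle here; the corollary is essentially a direct consequence of the theorem together with the $\SL(2,\R)$ Hilbert--Schmidt lower bound and the normalization of the DOSM across bands.
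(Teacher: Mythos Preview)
Your proof is correct and follows essentially the same route as the paper's: both combine Theorem~\ref{t:per:ids:hilbschmidt} with the lower bound $\|M\|_2^2 \geq 2$ for $M \in \SL(2,\R)$ and the fact that each band carries $dk$-mass exactly $1/p$. The paper attributes the Hilbert--Schmidt lower bound to Cauchy--Schwarz while you use AM--GM on the singular values, but this is the same inequality.
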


\begin{proof}
This is immediate from Theorem~\ref{t:per:ids:hilbschmidt}, the observation $\|M\|\geq \sqrt{2}$ for any $M \in \SL(2,\R)$ (by Cauchy--Schwarz), and the fact that the $dk$-measure of $B$ is precisely $1/p$.
\end{proof}

By way of motivation, suppose $\theta$ is a smooth function of $t$. By a direct calculation, one can verify that
$$
R_\theta^{-1} \frac{dR_\theta}{dt}
=
\begin{bmatrix} 0 & - d\theta/dt \\ d\theta/dt & 0 \end{bmatrix},
$$
which motivates us to define the \emph{anti-trace} of a $2 \times 2$ matrix by
$$
\atr \begin{bmatrix} a & b \\ c & d \end{bmatrix} = c - b.
$$
Naturally, for all matrices $A$ and $B$ and all scalars $\lambda$, one has
$$
\atr(A + \lambda B) = \atr(A) + \lambda \, \atr(B).
$$
However, $\atr$ is not cyclic; that is, one can have $\atr(AB) \neq \atr(BA)$.  However, we have the following weakened variant of cyclicity.

\begin{lemma} \label{l:atr:conj:inv}
If $R \in \SO(2,\R)$ and $A$ is any $2 \times 2$ matrix,
\begin{equation} \label{eq:atr:conj:inv}
\atr\left(R^{-1} A R \right)
=
\atr(A).
\end{equation}
\end{lemma}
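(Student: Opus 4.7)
The plan is to reduce antitrace to ordinary trace via a linear functional, then exploit the (unique) commutation property that $\SO(2,\R)$ enjoys inside $\mathrm{M}_2(\R)$.

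First, I would introduce the antisymmetric generator $J_0 := \begin{bmatrix} 0 & -1 \\ 1 & 0 \end{bmatrix}$ and observe the identity
\[
\atr(A) = \tr(J_0^{\mathsf{T}} A)
\]
for every $2 \times 2$ matrix $A$, which is an immediate one-line check on the $(1,2)$ and $(2,1)$ entries. This linearizes the statement in a form compatible with the cyclicity of the ordinary trace, which is the property we do have available (recall that $\atr$ itself is not cyclic, so we cannot apply cyclicity directly to $\atr(R^{-1}AR)$).

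Second, I would verify that $J_0$ commutes with every element of $\SO(2,\R)$. Writing $R = R_\phi = \cos(\phi) I + \sin(\phi) J_0$ makes this transparent: $R$ is a polynomial in $J_0$, so $R J_0 = J_0 R$, and therefore $R J_0^{\mathsf{T}} = J_0^{\mathsf{T}} R$ as well (take transposes, or note $J_0^{\mathsf{T}} = -J_0$). Equivalently, $R^{-1} J_0^{\mathsf{T}} R = J_0^{\mathsf{T}}$ for every $R \in \SO(2,\R)$.

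Combining these two facts with the cyclic property of the ordinary trace finishes the proof in a single line:
\[
\atr(R^{-1} A R) = \tr\!\bigl(J_0^{\mathsf{T}} R^{-1} A R\bigr) = \tr\!\bigl(R J_0^{\mathsf{T}} R^{-1} A\bigr) = \tr(J_0^{\mathsf{T}} A) = \atr(A).
\]
There is no serious obstacle here; the only subtlety is recognizing that while $\atr$ fails to be cyclic in general, its failure is controlled precisely by the commutator $[R, J_0]$, which vanishes on $\SO(2,\R)$. One could alternatively give a direct proof by decomposing $A = S + K$ into symmetric and antiskew parts, noting that $\atr$ annihilates symmetric matrices and that conjugation by $R \in \SO(2,\R)$ preserves the antisymmetric subspace pointwise (since that subspace is one-dimensional and spanned by $J_0$, which commutes with $R$); but the trace formulation is the most efficient presentation.
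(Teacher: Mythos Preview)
Your argument is correct. The paper's own proof consists of the single word ``Calculation,'' i.e.\ a direct entrywise expansion of $R_\phi^{-1} A R_\phi$ and reading off the $(2,1)$ and $(1,2)$ entries. Your route is more conceptual: by writing $\atr(A) = \tr(J_0^{\mathsf T} A)$ you reduce the question to the commutation $[R,J_0]=0$ together with ordinary trace cyclicity, which explains \emph{why} the identity holds rather than merely verifying it. Both approaches are elementary and of comparable length; yours has the advantage of isolating exactly which feature of $\SO(2,\R)$ is being used (it is the centralizer of $J_0$ in $\mathrm{GL}(2,\R)$), while the paper's brute-force check is perhaps marginally quicker to execute. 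One trivial slip: in your closing remark you write ``antiskew'' where you mean ``skew-symmetric.''
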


\begin{proof}
Calculation.
\end{proof}

\begin{lemma} \label{l:smoothconj:to:rot}
Suppose $I \subset \R$ is an open interval and $\Phi: I \to \SL(2,\R)$ is a smooth map such that $\big| \tr(\Phi(t)) \big| < 2$ for all $t \in I$. Under these conditions, there exists a smooth choice of $M \in \SL(2,\R)$ such that
\begin{equation} \label{eq:smoothconj:to:rot}
M \Phi M^{-1} = R_\theta,
\end{equation}
where $2 \cos(\theta) = \tr(\Phi)$. Moreover, the angle $\theta$ can be chosen to be a smooth function of $t$; in this case, it satisfies
$$
\frac{d\theta}{dt} = \frac{1}{2} \atr \left( M \Phi^{-1} \frac{d\Phi}{dt} M^{-1} \right),
$$
\end{lemma}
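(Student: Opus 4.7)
The plan is to prove the existence of the smooth conjugacy first and then verify the formula for $d\theta/dt$ by a direct differentiation, with the anti-trace doing the work of killing the terms that depend on the choice of $M$.

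\textbf{Step 1: Smooth choice of $M$.} Since $|\tr \Phi(t)| < 2$, the fixed-point equation $c z^2 + (d-a)z - b = 0$ for $\Phi = \bigl[\begin{smallmatrix} a & b \\ c & d\end{smallmatrix}\bigr]$ acting on $\overline{\C}$ by LFTs has discriminant $(\tr \Phi)^2 - 4 < 0$. This first forces $c \neq 0$ (otherwise $ad = 1$ with $|a+d|<2$ is impossible for real $a,d$), and then the unique fixed point in $\C_+$ is
$$
z(t) = \frac{a-d + i\sqrt{4 - (\tr \Phi)^2}}{2c}
$$
if $c>0$ (and the conjugate root otherwise). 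This is a smooth function of $t$. Writing $z(t) = x(t) + i y(t)$ with $y(t) > 0$, the matrix
$$
M(t) = \frac{1}{\sqrt{y(t)}}\begin{bmatrix} 1 & -x(t) \\ 0 & y(t)\end{bmatrix} \in \SL(2,\R)
$$
is smooth in $t$ and its LFT sends $z(t) \mapsto i$. Since LFTs by $\SL(2,\R)$ act transitively on $\C_+$ with stabilizer $\SO(2,\R)$ at $i$, this $M$ satisfies \eqref{eq:smoothconj:to:rot} for some angle $\theta(t)$; the uniqueness statement \eqref{eq:conj:uniqueUpToRot} confirms that all other smooth conjugators differ by multiplication by a smooth $\SO(2,\R)$-valued function on the left. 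Because $R_{\theta(t)} = M(t)\Phi(t)M(t)^{-1}$ is smooth and the covering map $\R \to \SO(2,\R)$, $\theta \mapsto R_\theta$, is a smooth local diffeomorphism, any choice of initial value of $\theta$ extends to a unique smooth real-valued $\theta(t)$ satisfying $2\cos\theta(t) = \tr \Phi(t)$.

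\textbf{Step 2: Deriving the formula.} Differentiating $M\Phi M^{-1} = R_\theta$ and using $\dot{(M^{-1})} = -M^{-1}\dot M M^{-1}$ together with $\dot R_\theta = \dot\theta \, J R_\theta$, where $J = R_{\pi/2} = \bigl[\begin{smallmatrix} 0 & -1 \\ 1 & 0\end{smallmatrix}\bigr]$, yields
$$
\bigl[\dot M M^{-1},\; R_\theta\bigr] + M \dot\Phi M^{-1} = \dot\theta \, J R_\theta.
$$
Right-multiplying by $R_\theta^{-1} = M\Phi^{-1}M^{-1}$ gives
$$
M \Phi^{-1} \dot\Phi M^{-1} = \dot\theta \, R_\theta^{-1} J R_\theta - R_\theta^{-1}\bigl[\dot M M^{-1},\; R_\theta\bigr].
$$
Since $J \in \SO(2,\R)$, it commutes with $R_\theta$, so $R_\theta^{-1} J R_\theta = J$, and clearly $\atr(J) = 2$. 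It remains to check that the anti-trace of the second term vanishes. Expanding the commutator,
$$
R_\theta^{-1}\bigl[\dot M M^{-1},\; R_\theta\bigr] = R_\theta^{-1}\bigl(\dot M M^{-1}\bigr)R_\theta - \dot M M^{-1},
$$
and by Lemma~\ref{l:atr:conj:inv} (with $R = R_\theta$) the first summand has the same anti-trace as $\dot M M^{-1}$, so the two terms cancel. Taking anti-traces therefore yields $\atr(M\Phi^{-1}\dot\Phi M^{-1}) = 2\dot\theta$, which is the claimed identity.

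\textbf{Main obstacle.} The calculation in Step~2 is essentially forced once one uses Lemma~\ref{l:atr:conj:inv}; the only subtle point is the smooth construction of $M(t)$ in Step~1, where one must verify that the fixed point of $\Phi(t)$ in $\C_+$ depends smoothly on $t$ (this is what the ellipticity hypothesis $|\tr\Phi| < 2$ is buying us). Once $M$ is known to be smooth, smoothness of $\theta$ follows from smoothness of $M\Phi M^{-1}$ in $\SO(2,\R)$ together with the local-diffeomorphism property of the angular parametrization.
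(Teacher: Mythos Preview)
Your proof is correct and follows essentially the same route as the paper's: construct $M$ from the smoothly varying fixed point $z(t)\in\C_+$, differentiate the conjugacy relation, and use Lemma~\ref{l:atr:conj:inv} to kill the $M$-dependent terms when taking the anti-trace. One small slip: where you write ``Right-multiplying by $R_\theta^{-1}$'' you are in fact left-multiplying (which is what produces $M\Phi^{-1}\dot\Phi M^{-1}$ from $R_\theta^{-1}M\dot\Phi M^{-1}$); the displayed equation that follows is nevertheless correct.
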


\begin{proof}
To construct the conjugacy $M$, first notice that the unique fixed point $z=z(t)$ of $\Phi$ varies smoothly with $t$.  We then define
$$
M(t)
=
\big( \Im z(t) \big)^{-1/2} \begin{bmatrix} 1 & - \Re z(t) \\ 0 & \Im z(t) \end{bmatrix},
$$
By construction, the LFT corresponding to $M(t)$ maps $z(t)$ to $i$, so (the LFT corresponding to) $M \Phi M^{-1}$ fixes $i$, which implies $M \Phi M^{-1} \in \SO(2,\R)$. By cyclicity of the trace, $M \Phi M^{-1}$ must be of the claimed form. Differentiating the relation \eqref{eq:smoothconj:to:rot} yields
$$
\frac{dM}{dt} \Phi M^{-1} + M \frac{d\Phi}{dt} M^{-1} + M \Phi \frac{dM^{-1}}{dt}   = \frac{dR}{dt}
=
R \begin{bmatrix} 0 & - d\theta/dt \\ d\theta/dt & 0 \end{bmatrix}.
$$
Multiply on the left by $R^{-1}$ and simplify using \eqref{eq:smoothconj:to:rot} to obtain
\begin{equation} \label{eq:smoothconj:to:rot:step}
R^{-1} \frac{dM}{dt} M^{-1} R + M \Phi^{-1} \frac{d\Phi}{dt} M^{-1} + M\frac{d M^{-1}}{dt}
=
\begin{bmatrix} 0 & - d\theta/dt \\ d\theta/dt & 0 \end{bmatrix}.
\end{equation}
By \eqref{eq:atr:conj:inv}, linearity of the anti-trace, and the product rule,
\begin{align*}
\atr \left(R^{-1} \frac{dM}{dt} M^{-1} R + M \frac{dM^{-1}}{dt}\right)
& =
\atr\left(  \frac{dM}{dt} M^{-1} + M \frac{d M^{-1}}{dt} \right) \\
& =
\atr\left( \frac{d}{dt}(MM^{-1}) \right) \\
& =
0.
\end{align*}
Thus, \eqref{eq:smoothconj:to:rot} follows by taking the anti-trace of \eqref{eq:smoothconj:to:rot:step}.
\end{proof}

\begin{proof}[Proof of Theorem~\ref{t:per:ids:hilbschmidt}]
First, notice that $\|M_j\|_2$ does not depend on the choice of conjugacy, for any other conjugacy from $\Phi_j$ to a rotation must take the form $Q M_j$ for some $Q \in \SO(2,\R)$ (cf.\ \eqref{eq:conj:uniqueUpToRot} and the discussion surrounding it).
Since we may take $M_j$ to be given by the explicit formula
\[M_j = (\Im z_j)^{-1/2} \begin{bmatrix} 1 & -\Re z_j \\ 0 & \Im z_j  \end{bmatrix}, \]we see that
$$
\| M_j \|_2^2
=
\frac{1 + |z_j|^2}{\Im z_j},
$$
where $z_j \in \C_+$ is the unique fixed point of the action of $\Phi_j$ on the upper half-plane. Notice that $T_j z_j = z_{j+1}$ and hence $M_{j+1} T_j M_j^{-1} $ fixes $i$, so $M_{j+1} T_j M_j^{-1} = Q_j \in \SO(2,\R)$. One can compute
$$
T_j^{-1} \frac{dT_j}{dE} = \begin{bmatrix} 0 & 0 \\ -1 & 0 \end{bmatrix},
$$
Thus, differentiating $\Phi_1$ gives us
$$
\Phi_1^{-1} \frac{d\Phi_1}{dE}
=
\sum_{j=1}^{p} A_{j-1}^{-1} \begin{bmatrix} 0 & 0 \\ -1 & 0 \end{bmatrix} A_{j-1}.
$$
With $R_0 = I$ and $R_j = Q_{j} \cdots Q_1$ for $j \ge 1$, we have
\begin{equation} \label{eq:per:ids:hilbschmidt:deriv}
\Phi_1^{-1} \frac{d\Phi_1}{dE}
=
\sum_{j=1}^{p} M_1^{-1} R_{j-1}^{-1} M_{j} \begin{bmatrix} 0 & 0 \\ -1 & 0 \end{bmatrix} M_j^{-1} R_{j-1} M_1
\end{equation}
To find the rate of change of $\theta$ with respect to $E$, we apply Lemma~\ref{l:smoothconj:to:rot} and compute
\begin{align*}
\left| \frac{d\theta}{dE} \right|
& =
\left|\frac{1}{2} \atr \left( M_1 \Phi_1^{-1} \frac{d\Phi_1}{dE} M_1^{-1} \right) \right| \\
& =
\frac{1}{2} \left| \atr\left( \sum_{j=1}^{p} M_j \begin{bmatrix} 0 & 0 \\ -1 & 0 \end{bmatrix} M_j^{-1} \right) \right|  \\
& =
\frac{1}{2} \sum_{j=1}^{p} \frac{|z_j|^2}{\Im z_j }.
\end{align*}
The second line follows from \eqref{eq:per:ids:hilbschmidt:deriv} and Lemma~\ref{l:atr:conj:inv}, and the final line is a straightforward computation from the explicit form of $M_j$. Since
\[z_{j+1} = T_j z_j = \frac{(E-b_j)z_j-1}{a_j^2 z_j} = \frac{(E-b_j)}{a_j^2} - \frac{1}{a_j^2 z_j},\]
we note
$$
\Im z_{j+1}
=
\frac{\Im z_j}{a_j^2|z_j|^2}.
$$
If $a_j \equiv a_0$ is constant, this implies
\begin{align*}
\left| \frac{d\theta}{dE} \right|
=
\frac{1}{2}\sum_{j=1}^{p} \frac{|z_j|^2}{\Im z_j}
&=
\frac{1}{2} \sum_{j=1}^{p} \frac{1}{1+a_0^2}\frac{|z_j|^2}{\Im z_j} + \frac{a_0^2}{1+a_0^2}\frac{1}{a_0^2 \Im z_{j+1}} \\
&=
\frac{1}{2(1+a_0^2)} \sum_{j=1}^{p} \frac{|z_j|^2 + 1}{\Im z_j} \\
& =
\frac{1}{2(1+a_0^2)} \sum_{j=1}^p \|M_j\|_2^2.
\end{align*}
Otherwise, we still have the lower bound
\begin{align*}
\left| \frac{d\theta}{dE} \right|
=
\frac{1}{2}\sum_{j=1}^{p} \frac{|z_j|^2}{\Im z_j}
&=
\frac{1}{2} \sum_{j=1}^{p} \frac{1}{1+a_j^2}\frac{|z_j|^2}{\Im z_j} + \frac{a_j^2}{1+a_j^2}\frac{1}{a_j^2 \Im z_{j+1}} \\
&=
\frac{1}{2} \sum_{j=1}^{p} \frac{1}{1+a_j^2}\frac{|z_j|^2}{\Im z_j}+ \frac{1}{1+a_{j-1}^2}\frac{1}{\Im z_j} \\
& \geq
C \sum_{j=1}^p \|M_j\|_2^2,
\end{align*}
where $C>0$ only depends on $\|a\|_\infty$. In either case, the conclusion of the theorem follows from \cite[Eq.~(5.3.34)]{simszego}.
\end{proof}

\section*{Acknowledgements}

The authors are grateful to Artur Avila and Rui Han for helpful conversations. D.D.\ was supported in part by NSF grants DMS--1700131 and DMS--2054752, an Alexander von Humboldt Foundation research award, and Simons Fellowship $\# 669836$. J.F.\ was supported in part by NSF grant DMS--2213196 and Simons Collaboration Grant \#711663. 

\end{appendix}


\begin{thebibliography}{00}

\bibitem{A09} A.\ Avila, On the spectrum and Lyapunov exponent of limit-periodic Schr\"odinger operators, \textit{Commun.\ Math.\ Phys.}\ \textbf{288} (2009), 907--918.

\bibitem{AJM17} A.\ Avila, S.\ Jitomirskaya, C.\ Marx, Spectral theory of extended Harper's model and a question by Erd\H{o}s and Szekeres, \textit{Invent.\ Math.}\ \textbf{210} (2017), 283--339.

\bibitem{AS81} J.\ Avron, B.\ Simon, Almost periodic Schr\"odinger operators. I.~Limit periodic potentials, \textit{Commun.\ Math.\ Phys.}\ \textbf{82} (1981), 101--120.

\bibitem{Chul81} V.\ Chulaevskii, Perturbations of a Schr\"odinger operator with periodic potential (Russian). \textit{Uspekhi Mat.\ Nauk} \textbf{36} (1981), 203--204.

\bibitem{Chul84} V.\ Chulaevskii, An inverse spectral problem for limit-periodic Schr\"odinger operators. (Russian) \textit{Funktsional.\ Anal.\ i Prilozhen.}\ \textbf{18} (1984), 63--66.


\bibitem{CFKS} H.\ L.\ Cycon, R.\ G.\ Froese, W.\ Kirsch, B.\ Simon, \textit{Schr\"odinger Operators With Applications to Quantum Mechanics and Global Geometry}, Texts and Monographs in Physics, Springer, Berlin, 1987

\bibitem{D17} D.\ Damanik, Schr\"odinger operators with dynamically defined potentials, \textit{Ergodic Theory Dynam.\ Systems} \textbf{37} (2017), 1681--1764.

\bibitem{DF2018} D.\ Damanik, J.\ Fillman, Limit-periodic Schr\"odinger operators with Lipschitz continuous IDS, \textit{Proc.\ Amer.\ Math.\ Soc.}\ \textbf{147} (2019), 1531--1539.

\bibitem{DF20} D.\ Damanik, J.\ Fillman, Spectral properties of limit-periodic operators, in \textit{Analysis and Geometry on Graphs and Manifolds}, Eds.~M.\ Keller, D.\ Lenz, R.\ Wojciechowski, London Mathematical Society, Lecture Notes Series \textbf{461}, Cambridge University Press (2020), 382--444.

\bibitem{DF21a} D.\ Damanik, J.\ Fillman, \textit{One-Dimensional Ergodic Schr\"{o}dinger Operators, I.~General Theory}, Graduate Studies in Mathematics \textbf{221}, American Mathematical Society, 2022.


\bibitem{DF21b} D.\ Damanik, J.\ Fillman, \textit{One-Dimensional Ergodic Schr\"{o}dinger Operators, II.~Special Classes}, Monograph in preparation.

\bibitem{DFL2017} D.\ Damanik, J.\ Fillman, M.\ Lukic, Limit-periodic continuum Schr\"odinger operators with zero-measure Cantor spectrum, \textit{J.\ Spectral Th.}\ \textbf{7} (2017), 1101--1118.

\bibitem{DG10} D.\ Damanik, Z.\ Gan, Limit-periodic Schr\"{o}dinger operators in the regime of positive Lyapunov exponents, \textit{J.\ Funct.\ Anal.}\ \textbf{258}, (2010), 4010--4025.

\bibitem{DG11} D.\ Damanik, Z.\ Gan, Spectral properties of limit-periodic Schr\"{o}dinger operators, \textit{Commun.\ Pure Appl.\ Anal.}\ \textbf{10} (2011), 859--871.

\bibitem{EFGL2022JFA} B.\ Eichinger, J.\ Fillman, E.\ Gwaltney, M.\ Luki\'{c}, Limit-periodic Dirac operators with thin spectra. \textit{J.\ Funct.\ Anal.}\ \textbf{283} (2022), no. 12, Paper No. 109711.

\bibitem{egorova} I.\ E.\ Egorova, Spectral analysis of Jacobi limit-periodic matrices, \textit{Dokl.\ Akad.\ Nauk Ukrain.\ SSR Ser.\ A} \textbf{3} (1987), 7--9. (in Russian)


\bibitem{F2017CMP} J.\ Fillman, Ballistic transport for limit-periodic Jacobi matrices with applications to quantum many-body problems, \textit{Commun.\ Math.\ Phys.}\ \textbf{350} (2017), 1275--1297.

\bibitem{F2017JST} J.\ Fillman, Spectral homogeneity of discrete one-dimensional limit-periodic operators, \textit{J.\ Spectral Th.}\ \textbf{7} (2017),  201--226.

\bibitem{FL2017} J.\ Fillman, M.\ Lukic, Spectral homogeneity of limit-periodic Schr\"odinger operators, \textit{J.\ Spectral Th.}\ \textbf{7} (2017), 387--406.

\bibitem{Gordon1976UMN} A.\ Gordon, The point spectrum of the one-dimensional Schr\"{o}dinger operator. (Russian). \textit{Uspehi Mat.\ Nauk} \textbf{31} (1976), 257--258.

\bibitem{HanJit2017Adv} R.\ Han, S.\ Jitomirskaya, Full measure reducibility and localization for quasiperiodic Jacobi operators: a topological criterion, \textit{Adv.\ Math.}, \textbf{319} (2017), 224--250. 

\bibitem{HanYanZha2020Jdam} R.\ Han, F.\ Yang, S.\ Zhang,
Spectral dimension for $\beta$-almost periodic singular Jacobi operators and the extended Harper's model, \textit{J.\ Anal.\ Math.}\ \textbf{142} (2020), 605--666.

\bibitem{KarLee2007JAM} Y.\ Karpeshina, Y.-R.\ Lee, Spectral properties of polyharmonic operators with limit-periodic potential in dimension two, \textit{J. Anal. Math.} \textbf{102} (2007), 225--310.

\bibitem{KarLee2009OTAA}  Y.\ Karpeshina, Y.-R.\ Lee, On the {S}chr\"{o}dinger operator with limit-periodic potential in
              dimension two. \textit{Methods of spectral analysis in mathematical physics}, Oper.\ Theory Adv.\ Appl.,  \textbf{186} Birkh\"{a}user Verlag, Basel, (2009), 257--265.


\bibitem{KarLee2013JAM}  Y.\ Karpeshina, Y.-R.\ Lee, Spectral properties of a limit-periodic {S}chr\"{o}dinger operator in dimension two, \textit{J. Anal. Math.}  \textbf{120} (2013), 1--84.

\bibitem{KarParSht} Y.\ Karpeshina, L.\ Parnovski, R.\ Shterenberg, Bethe-Sommerfeld conjecture and absolutely continuous spectrum of multi-dimensional quasi-periodic Schr\"odinger operators, arxiv:2010.05881.

\bibitem{KarParSht2}  Y.\ Karpeshina, L.\ Parnovski, R.\ Shterenberg, Ballistic transport for Schr\"odinger operators with quasi-periodic potentials, \textit{J.\ Math.\ Phys.}\ \textbf{62} (2021), Paper No.~053504, 12 pp. 

\bibitem{K17} S.\ Klein, Anderson localization for one-frequency quasi-periodic block Jacobi operators, \textit{J.\ Funct.\ Anal.}\ \textbf{273} (2017), 1140--1164.


\bibitem{KrugerGan2011MathNach} H.\ Kr\"{u}ger, Z.\ Gan, Optimality of log H\"{o}lder continuity of the integrated density of states, \textit{Math.\ Nachr.}\ \textbf{284} (2011), 1919--1923.


\bibitem{M14} C.\ Marx, Dominated splittings and the spectrum of quasi-periodic Jacobi operators, \textit{Nonlinearity} \textbf{27} (2014), 3059--3072.

\bibitem{MJ17} C.\ Marx, S.\ Jitomirskaya, Dynamics and spectral theory of quasi-periodic Schr\"odinger-type operators, \textit{Ergodic Theory Dynam.\ Systems} \textbf{37} (2017), 2353--2393.


\bibitem{MC84} S.A.\ Molchanov, V.\ Chulaevskii, The structure of a spectrum of the lacunary-limit-periodic Schr\"odinger operator (Russian). \textit{Funktsional.\ Anal.\ i Prilozhen.}\ \textbf{18} (1984), 90--91.

\bibitem{M81} J.\ Moser, An example of a Schr\"odinger equation with almost periodic potential and nowhere dense spectrum, \textit{Comment.\ Math.\ Helv.}\ \textbf{56} (1981), 198--224.

\bibitem{Pastur1980CMP} L.\ Pastur, Spectral properties of disordered systems in the one-body approximation, \textit{Commun.\ Math.\ Phys.}\ \textbf{75} (1980), 179--196.

\bibitem{PasturFigotinBook} L.\ Pastur, A.\ Figotin, \textit{Spectra of random and almost-periodic operators}.
Grundlehren der mathematischen Wissenschaften, \textbf{297}. Springer-Verlag, Berlin, 1992.

\bibitem{PT1} L.\ Pastur, V.\ A.\ Tkachenko, On the spectral theory of the one-dimensional Schr\"odinger operator with limit-periodic potential (Russian), \textit{Dokl.\ Akad.\ Nauk SSSR} \textbf{279} (1984), 1050--1053.

\bibitem{PT2}  L.\ Pastur, V.\ A.\ Tkachenko, Spectral theory of a class of one-dimensional Schr\"odinger operators with limit-periodic potentials (Russian), \textit{Trudy Moskov.\ Mat.\ Obshch.}\ \textbf{51} (1988), 114--168.

\bibitem{P83} J.\ P\"oschel, Examples of discrete Schr\"odinger operators with pure point spectrum, \textit{Commun.\ Math.\ Phys.}\ \textbf{88} (1983), 447--463.

\bibitem{Seifert2019} C.\ Seifert, A quantitative version of Gordon’s Theorem for Jacobi and Sturm-Liouville operators, arXiv:1407.7351.


\bibitem{simszego} B.\ Simon, \textit{Szeg\"{o}'s Theorem and its Descendants: Spectral Theory for $L^2$ Perturbations of Orthogonal Polynomials}, M.\ B.\ Porter Lectures, Princeton University Press, Princeton, NJ 2011.

\bibitem{T14} K.\ Tao, H\"older continuity of Lyapunov exponent for quasi-periodic Jacobi operators, \textit{Bull.\ Soc.\ Math.\ France} \textbf{142} (2014), 635--671.

\bibitem{T20} K.\ Tao, Non-perturbative positivity and weak H\"older continuity of Lyapunov exponent of analytic quasi-periodic Jacobi cocycles defined on a high dimension torus, \textit{Electron.\ J.\ Differential Equations} (2020), Paper No.~51, 14 pp.

\bibitem{TV17} K.\ Tao, M.\ Voda, H\"older continuity of the integrated density of states for quasi-periodic Jacobi operators, \textit{J.\ Spectr.\ Theory} \textbf{7} (2017), 361--386.

\end{thebibliography}
\end{document}